\newtheorem{theorem}[subsection]{Theorem}
\newtheorem*{TNT}{Tensor Nilpotence Theorem}
\newtheorem*{NNT}{Morphic Intersection Theorem}
\newtheorem{proposition}[subsection]{Proposition}
\newtheorem{lemma}[subsection]{Lemma}
\newtheorem{corollary}[subsection]{Corollary}
\theoremstyle{definition}
\theoremstyle{remark}
\newtheorem{remark}[subsection]{Remark}
\numberwithin{equation}{subsection}
\newcommand{\ann}{\operatorname{ann}}
\newcommand{\chcat}[1]{{\mathsf{C}(#1)}}
\newcommand{\dcat}[1]{{\mathsf{D}(#1)}}
\newcommand{\dd}{\partial}
\newcommand{\filt}[2]{{#2}(#1)}
\newcommand{\height}{\operatorname{height}}
\newcommand{\hh}[1]{\operatorname{H}(#1)}
\newcommand{\HH}[2]{\operatorname{H}_{#1}(#2)}
\newcommand{\Hom}[3]{\operatorname{Hom}_{#1}(#2,#3)}
\newcommand{\LC}[3]{\operatorname{H}^{#1}_{#2}(#3)}
\newcommand{\lch}{\operatorname{R}\!\Gamma}
\newcommand{\level}[3][{}]{\operatorname{level}_{#1}^{#2}{#3}}
\newcommand{\lotimes}[1]{\otimes^{\operatorname{L}}_{#1}}
\newcommand{\ov}{\overline}
\newcommand{\Rhom}[3]{\operatorname{RHom}_{#1}({#2},{#3})}
\newcommand{\cspan}[1]{\operatorname{span}{#1}}
\newcommand{\wt}{\widetilde}
\newcommand{\bbN}{{\mathbb N}}
\newcommand{\bbZ}{{\mathbb Z}}
\newcommand{\lra}{\longrightarrow}
\newcommand{\xra}{\xrightarrow}
\newcommand{\ges}{\geqslant}
\newcommand{\les}{\leqslant}
\newcommand{\nat}[1]{{#1}^{\natural}}
\newcommand{\bsx}{{\boldsymbol x}}
\newcommand{\bsy}{{\boldsymbol y}}
\newcommand{\Tor}[4]{\operatorname{Tor}_{#1}^{#2}(#3,#4)}
\newcommand\fm{{\mathfrak m}}
\newcommand\fp{{\mathfrak p}}
\newcommand\Image{\operatorname{Im}}
\newcommand\Ker{\operatorname{Ker}}
\newcommand{\rank}{\operatorname{rank}}
\newcommand{\shift}{{\sf\Sigma}}
\newcommand\Spec{\operatorname{Spec}}
\begin{document}

\title[Tensor nilpotent morphisms]{Big Cohen-Macaulay modules, \\ morphisms of perfect complexes, and \\ intersection theorems in local algebra}

\author[L.~L.~Avramov]{Luchezar L.~Avramov}
\address{Department of Mathematics,
University of Nebraska, Lincoln, NE 68588, U.S.A.}
\email{avramov@math.unl.edu}

\author[S.~B.~Iyengar]{Srikanth B.~Iyengar}
\address{Department of Mathematics,
University of Utah, Salt Lake City, UT 84112, U.S.A.}
\email{iyengar@math.utah.edu}

\author[A.~Neeman]{Amnon Neeman}
\address{Centre for Mathematics and its Applications, Mathematical Sciences Institute
Australian National University, Canberra, ACT 0200, Australia.}
\email{Amnon.Neeman@anu.edu.au}

\thanks{Partly supported by NSF grants DMS-1103176 (LLA) and DMS-1700985 (SBI)}

\date{\today}

\keywords{big Cohen-Macaulay module, homological conjectures, level, perfect complex, rank, tensor nilpotent morphism}
\subjclass[2010]{13D22 (primary); 13D02,   13D09  (secondary)}

\begin{abstract} 
There is a well known link from the first topic in the title to the third one.  In this paper we thread that link through the second topic.
The central result is a criterion for the tensor nilpotence of morphisms of perfect complexes over commutative noetherian rings, in 
terms of a numerical invariant of the complexes known as their level.  Applications to local rings include a strengthening of the 
Improved New Intersection Theorem, short direct proofs of several results equivalent to it, and lower bounds on the ranks of the 
modules in every finite free complex that admits a structure of differential graded module over the Koszul complex on some 
system of parameters.
\end{abstract}

\maketitle

\section{Introduction}

A big Cohen-Macaulay module over a commutative noetherian local ring $R$ is a (not necessarily finitely generated) $R$-module $C$ such that some system of parameters of $R$ forms a $C$-regular sequence.  In \cite{Hoc1} Hochster showed that the existence of such modules implies several fundamental homological properties of finitely generated $R$-modules.  In \cite{Hoc2}, published in \cite{Hoc3}, he proved that big Cohen-Macaulay modules exist for algebras over fields, and conjectured their existence in the case of mixed characteristic.  This was recently proved by Y.~Andr\'e in \cite{An}; as a major consequence many ``Homological Conjectures'' in local algebra are now theorems.

A perfect $R$-complex is a bounded complex of finite projective $R$-modules.  Its level with respect to $R$, introduced in \cite{ABIM} and defined in \ref{ssec:levels}, measures the minimal number of mapping cones needed to assemble a quasi-isomorphic complex from 
bounded complexes of finite projective modules with differentials equal to zero.  

The main result of this paper, which appears as Theorem~\ref{thm:fiberwisezero}, is the following

  \begin{TNT}
Let $f\colon G\to F$ be a morphism of perfect complexes over a commutative noetherian ring $R$.

If $f$ factors through a complex whose homology is $I$-torsion for some ideal $I$ of $R$ with  
$\height I\ge \level{R}{\Hom RGF}$, then the induced morphism 
  \[
\otimes^n_R\,f\colon \otimes^n_R G \to \otimes^n_R\,F  
  \]
is homotopic to zero for some non-negative integer $n$.
  \end{TNT}

\noindent 
Big Cohen-Macaulay modules play an essential, if discreet role in the proof, as a tool for constructing special 
morphisms in the derived category of $R$; see Proposition~\ref{prp:BCM}.

In applications to commutative algebra it is convenient to use another property of morphisms of perfect complexes:  $f$ is \emph{fiberwise zero} if $\hh{k(\fp)\otimes_{R}f}=0$ holds for every $\fp$ in $\Spec R$.  Hopkins~\cite{Hop} and Neeman~\cite{Ne} have shown that this is equivalent to tensor nilpotence; this is a key tool for the classification of the thick subcategories of perfect $R$-complexes. 

It is easy to see that the level of a complex does not exceed its span, defined in~\ref{ssec:complexes}.  Due to these remarks, the Tensor Nilpotence Theorem is equivalent to the

  \begin{NNT}
If $f$ is not fiberwise zero and factors through a complex with $I$-torsion homology for an ideal $I$ of $R$, then
there are inequalities:
\[
\cspan F + \cspan G -1 \ge  \level{R}{\Hom RGF} \ge  \height I + 1\,.
\]
  \end{NNT}

In Section~\ref{sec:applications} we use this result to prove directly, and sometimes to generalize and sharpen, several basic theorems in commutative algebra.  These include the Improved New Intersection Theorem, the  Monomial Theorem and several versions of the Canonical Element Theorem.  All of them are equivalent, but we do not know if they imply the Morphic Intersection Theorem; a potentially significant obstruction to that  is that the difference $\cspan F-\level RF$ can be arbitrarily big.

Another application, in Section~\ref{sec:rank}, yields lower bounds for ranks of certain finite free complexes, 
related to a conjecture of Buchsbaum and Eisenbud, and Horrocks.

In \cite{AIN} a version of the Morphic Intersection Theorem for certain tensor triangulated categories is proved.  This has implications for morphisms of perfect complexes of sheaves and, more generally, of perfect differential sheaves, over schemes.

\section{Perfect complexes}
\label{sec:invariants}
Throughout this paper $R$ will be a commutative noetherian ring. 

This section is a recap on the various notions and construction, mainly concerning perfect complexes, needed in this work. Pertinent references include \cite{ABIM, Ro0}.

\subsection{Complexes}
\label{ssec:complexes}
In this work, an $R$-complex (a shorthand for `a complex of $R$-modules') is a sequence of 
homomorphisms of $R$-modules
\[
X:= \quad \cdots \lra X_{n}\xra{\ \dd^{X}_{n}\ } X_{n-1} \xra{\dd^{X}_{n-1}} X_{n-2}\lra \cdots 
\]
such that $\dd^{X} \dd^{X}=0$. We write $\nat X$ for the graded $R$-module underlying $X$. The $i$th suspension of $X$ is the 
$R$-complex $\shift^i X$ with $(\shift^i X)_{n}=X_{n-i}$ and $\dd^{\shift^i X}_n=(-1)^i\dd^{X}_{n-i}$ for each~$n$. The \emph{span} of $X$ is the number 
\[
\cspan X:= \sup\{i\mid X_{i}\ne 0\} - \inf\{i\mid X_{i}\ne 0\} + 1
\]
Thus $\cspan X=-\infty$ if and only if $X=0$, and $\cspan X=\infty$ if and only if $X_{i}\ne 0$ for infinitely many 
$i\ge 0$.  The span of $X$ is finite if and only if $\cspan X$ is an natural number. When $\cspan X$ is finite we 
say that $X$ is \emph{bounded}.

Complexes of $R$-modules are objects of two distinct categories. 

In the category of complexes $\chcat R$ a morphism $f\colon Y\to X$ of $R$-complexes is a family 
$(f_i\colon Y_i\to X_i)_{i\in\bbZ}$ or $R$-linear maps satisfying $\dd^X_if_i=f_{i-1}\dd^Y_i$.  It is a 
\emph{quasi-isomorphism} if $\hh f$, the induced map in homology, is bijective. Complexes that 
can be linked by a string of quasi-isomorphisms are said to be \emph{quasi-isomorphic}. 

The derived category $\dcat R$ is obtained from $\chcat R$ by inverting all quasi-isomor\-phisms.
For constructions of the localization functor $\chcat R\to\dcat R$ and of the derived functors 
$?\lotimes R?$ and $\Rhom R{\text{>}}?$, see e.g.~\cite{GM, We, Li}.  When $P$ is a complex of 
projectives with $P_i=0$ for $i\ll0$, the functors $P\lotimes R?$ and $\Rhom RP?$ are represented 
by $P\otimes_{R}?$ and $\Hom RP?$, respectively.  In particular, the localization functor induces 
for each $n$ a natural isomorphism of abelian groups 
  \begin{equation}
    \label{eq:complexes1}
\HH {-n}{\Rhom RPX}\cong\Hom{\dcat R}P{\shift^{n}X}\,.
  \end{equation}

\subsection{Perfect complexes}
In $\chcat R$, a \emph{perfect} $R$-complex is a bounded complex of finitely generated projective $R$-modules.
When $P$ is perfect, the $R$-complex $P^{*}:=\Hom RPR$ is perfect and the natural biduality map
\[
P\lra P^{**}=\Hom R{\Hom RPR}R
\]
is an isomorphism. Moreover for any $R$-complex $X$ the natural map
\[
P^{*}\otimes_{R} X \lra \Hom RPX
\]
is an isomorphism. In the sequel these properties are used without comment. 

\subsection{Levels}
\label{ssec:levels}
A \emph{length $l$ semiprojective filtration} of an $R$-complex $P$ is a sequence of 
$R$-subcomplexes of finitely generated projective modules
\[
0=\filt 0P \subseteq \filt 1P \subseteq \cdots \subseteq \filt lP=P
\]
such that $\nat{\filt{i-1}P}$ is a direct summand of $\nat{\filt{i}P}$ and the differential of $\filt iP/\filt{i-1}P$ is 
equal to zero, for $i=1,\dots,l$.  For every $R$-complex $F$, we set
  \[
\level {R}F = \inf\left\{l\in\bbN\  \left|\ \begin{gathered}F \text{ is a retract of some $R$-complex $P$ that} \\  \text{has a semiprojective filtration of length $l$} \end{gathered}\,\right\}\right.\,.
  \]
By \cite[2.4]{ABIM}, this number is equal to the \emph{level} $F$ with respect to $R$, defined in \cite[2.1]{ABIM}.

In particular, $\level RF$ is finite if and only if $F$ is quasi-isomorphic to some perfect complex. 
When $F$ is quasi-isomorphic to a perfect complex $P$, one has
\begin{equation}
\label{eq:level-span}
\level {R}F\le\cspan P\,.
\end{equation}
Indeed, if $P:=0\to P_{b}\to \cdots \to P_{a}\to 0$, then consider the filtration by subcomplexes $\filt nP:= P_{< n+a}$. 
The inequality can be strict; see \ref{ssec:koszul} below.

When $R$ is regular, any $R$-complex $F$ with $\hh F$ finitely generated satisfies 
\begin{equation}
\label{eq:level-regular}
\level {R}F\le \dim R+1\,.
\end{equation}

For $R$-complexes $X$ and $Y$  one has 
\begin{equation}
\label{eq:level-props}
\begin{aligned}
&\level R{(\shift^{i}X)}=\level RX \quad\text{for every integer $i$, and} \\
&\level R{(X\oplus Y)}=\max\{\level RX,\level RY\}.
\end{aligned}
\end{equation}
These equalities follow easily from the definitions.

\begin{lemma}
\label{lem:levels}
The following statements hold for every perfect $R$-complex $P$.
\begin{enumerate}[\quad\rm(1)]
\item
$\level {R}{(P^{*})} = \level {R}P$.
\item
For each perfect $R$-complex $Q$ there are inequalities
\begin{align*}
\level R{(P \otimes_{R} Q)} &\leq \level RP+\level RQ-1\,.\\
\level{R}{\Hom RPQ} &\leq \level RP+\level RQ-1\,.
\end{align*}
\end{enumerate}
\end{lemma}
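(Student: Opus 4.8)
The plan is to work throughout with the description of level by semiprojective filtrations recalled in~\ref{ssec:levels}. First observe that the inequality for $\Hom RPQ$ reduces to the one for $P\otimes_{R}Q$: since $P$ is perfect, $\Hom RPQ$ is naturally isomorphic to $P^{*}\otimes_{R}Q$ and $P^{*}$ is again perfect, so granting~(1) and the $\otimes_{R}$-bound in~(2) one gets
\[
\level{R}{\Hom RPQ}=\level{R}{(P^{*}\otimes_{R}Q)}\le\level{R}{(P^{*})}+\level{R}{Q}-1=\level{R}{P}+\level{R}{Q}-1\,.
\]
Thus the substance is~(1) and the bound on $\level{R}{(P\otimes_{R}Q)}$, and I would obtain each by exhibiting an explicit semiprojective filtration: from a semiprojective filtration of a perfect complex representing $P$ one builds one of its dual, and from such filtrations of $P$ and $Q$ a ``product'' filtration of $P\otimes_{R}Q$. (Both statements are also instances of the estimates for levels under the contravariant functor $\Rhom R{-}R$ and under $-\lotimes R-$ established in \cite{ABIM}.)

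For~(1), set $l:=\level{R}{P}$ and, using \cite[2.4]{ABIM}, pick a perfect $R$-complex $P'$ of which $P$ is a retract and which carries a semiprojective filtration $0=\filt{0}{P'}\subseteq\cdots\subseteq\filt{l}{P'}=P'$. Iterating the direct-summand hypothesis, fix compatible graded decompositions $\nat{P'}=\bigoplus_{i=1}^{l}S_{i}$ with $\nat{\filt{i}{P'}}=\bigoplus_{i'\le i}S_{i'}$; then $S_{i}$ is graded-isomorphic to $\nat{\filt{i}{P'}/\filt{i-1}{P'}}$, a complex of finitely generated projectives with zero differential. I claim the subcomplexes
\[
\Ker\!\bigl((P')^{*}\lra(\filt{l-j}{P'})^{*}\bigr)\ \subseteq\ (P')^{*}\,,\qquad 0\le j\le l\,,
\]
form a semiprojective filtration of $(P')^{*}$ of length $l$. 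Indeed, on underlying graded modules the $j$-th term is $\bigoplus_{i>l-j}\Hom R{S_{i}}R$; so this sequence is increasing with first term $0$ and last term $(P')^{*}$, consecutive inclusions split on underlying graded modules, the $j$-th subquotient is $(\filt{l-j+1}{P'}/\filt{l-j}{P'})^{*}$, and the latter has zero differential because $\dd^{P'}(\filt{i}{P'})\subseteq\filt{i-1}{P'}$ (the subquotients $\filt{i}{P'}/\filt{i-1}{P'}$ having zero differential) forces $\pm(\dd^{P'})^{*}$ to carry $\Hom R{S_{i}}R$ into $\bigoplus_{i'>i}\Hom R{S_{i'}}R$. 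Since $\Hom R{-}R$ computes $\Rhom R{-}R$ on the perfect complexes $P$ and $P'$, it sends the retraction $P\to P'\to P$ to one exhibiting $P^{*}$ as a retract of $(P')^{*}$; hence $\level{R}{(P^{*})}\le l=\level{R}{P}$. Applying this with $P^{*}$ in place of $P$ and using $P^{**}\cong P$ gives the reverse inequality, so $\level{R}{(P^{*})}=\level{R}{P}$.

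For the $\otimes_{R}$-bound in~(2), set $m:=\level{R}{P}$ and $n:=\level{R}{Q}$, and choose perfect $R$-complexes $P'$, $Q'$ of which $P$, $Q$ are retracts, carrying semiprojective filtrations of lengths $m$, $n$ together with compatible graded decompositions $\nat{P'}=\bigoplus_{i=1}^{m}S_{i}$ and $\nat{Q'}=\bigoplus_{j=1}^{n}T_{j}$ as above. Then the subcomplexes
\[
\sum_{\substack{i+j\le k+1\\ i,\,j\ge1}}\filt{i}{P'}\otimes_{R}\filt{j}{Q'}\ \subseteq\ P'\otimes_{R}Q'\,,\qquad 0\le k\le m+n-1\,,
\]
form a semiprojective filtration of $P'\otimes_{R}Q'$ of length $m+n-1$: on underlying graded modules the $k$-th term is $\bigoplus_{(i-1)+(j-1)\le k-1}S_{i}\otimes_{R}T_{j}$, so the sequence is increasing with first term $0$ and last term $P'\otimes_{R}Q'$, consecutive inclusions split on underlying graded modules, and the $k$-th subquotient is graded-isomorphic to $\bigoplus_{(i-1)+(j-1)=k-1}S_{i}\otimes_{R}T_{j}$ — a complex of finitely generated projectives — which carries zero differential because $\dd^{P'}(\filt{i}{P'})\subseteq\filt{i-1}{P'}$ and $\dd^{Q'}(\filt{j}{Q'})\subseteq\filt{j-1}{Q'}$ force the differential of $P'\otimes_{R}Q'$ to send $S_{i}\otimes_{R}T_{j}$ into $\bigoplus_{i'+j'<i+j}S_{i'}\otimes_{R}T_{j'}$. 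As $P\otimes_{R}Q$ is a retract of $P'\otimes_{R}Q'$, we get $\level{R}{(P\otimes_{R}Q)}\le m+n-1=\level{R}{P}+\level{R}{Q}-1$; together with the reduction in the first paragraph this also yields the bound on $\level{R}{\Hom RPQ}$, completing~(2).

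The genuinely necessary — if routine — work is the two verifications flagged above: that the dual filtration in~(1) and the product filtration in~(2) really are \emph{semiprojective}, i.e.\ that consecutive steps split on underlying graded modules and that every subquotient has zero differential with finitely generated projective components. Both come down to transporting the fixed graded splittings of the given filtrations through $\Hom R{-}R$ and $-\otimes_{R}-$, combined with the elementary fact that a semiprojective filtration forces $\dd^{P'}(\filt{i}{P'})\subseteq\filt{i-1}{P'}$, so that the induced differential on any dual or tensor construction strictly lowers the filtration degree. A secondary technical point, which I would settle by appeal to \cite[2.4]{ABIM}, is that for perfect $F$ one may choose the resolving complex $P'$ to be perfect, so that $\Hom R{-}R$ agrees with $\Rhom R{-}R$ on it and retracts are preserved; without this one would have to argue directly in the derived category that $\Rhom R{-}R$ respects the thick-subcategory filtration generated by $R$.
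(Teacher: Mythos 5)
Your argument is correct and follows essentially the same route as the paper's proof: reduce the $\Hom$ bound to the $\otimes$ bound via $\Hom RPQ\cong P^{*}\otimes_{R}Q$, construct the dual filtration $\Ker\bigl((P')^{*}\to(\filt{l-j}{P'})^{*}\bigr)$ for (1), build the product filtration $\sum_{i+j\le k+1}\filt{i}{P'}\otimes_{R}\filt{j}{Q'}$ for (2), and handle retracts at the end. The only difference is presentational---you carry along explicit graded splittings and spell out why the differential strictly drops filtration degree, whereas the paper states the subquotient isomorphisms and leaves the verification to the reader---so there is nothing of substance to flag.
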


\begin{proof}
(1) If $P$ is a retract of $P'$, then $\level RP\le \level R{P'}$ and $P^{*}$ is a retract of $(P')^{*}$. 
Thus, we can assume $P$ itself has a finite semiprojective filtration $\{\filt nP\}_{n=0}^{l}$. The 
inclusions $\filt {l-i}P\subseteq \filt {l-i+1}P \subseteq P$ define subcomplexes
\[
\filt i{P^*}:=\Ker(P^{*}\lra {\filt {l-i}P}^{*})\subseteq\Ker(P^{*}\lra{\filt {l-i+1}P})=:\filt {i+1}{P^*}
\]
of finitely generated projective modules.  They form a length $l$ semiprojective filtration of $P^*$, as 
$\nat{\filt{i-1}{P^*}}$ is a direct summand of $\nat{\filt{i}{P^*}}$ and there are isomorphisms
\[
\frac{\filt n{P^*}}{\filt {n-1}{P^*}}\cong \left(\frac{\filt {l-n+1}P}{\filt {l-n}P}\right)^{*}.
\]
This gives $\level R{P^{*}}\leq \level RP$, and the reverse inequality follows from $P\cong P^{**}$.

(2)  Assume first that $P$ has a semiprojective filtration $\{\filt n{P}\}_{n=0}^{l}$ and $Q$ has a 
semiprojective filtration $\{\filt n{Q}\}_{n=0}^{m}$.  For all $h,i$, we identity $\filt hP\otimes_{R}\filt iQ$ 
with a subcomplex of $P\otimes_{R}Q$. For each non-negative integer $n\ge 0$ form the subcomplex
\[
\filt nC: = \sum_{j\ges 0} \filt {j+1}P \otimes_{R}\filt {n-j}Q
\]
of $P\otimes_{R}Q$.  A direct computation yields an isomorphism of $R$-complexes
\[
\frac{\filt nC}{\filt {n-1}C} \cong \sum_{j\ges 0} \frac{\filt {j+1}P}{\filt jP}\otimes_{R} \frac{\filt {n-j}Q}{\filt {n-j-1}Q}\,.
\]
Thus $\{\filt nC\}_{n=0}^{l+m-1}$ is a semiprojective filtration of $P\otimes_{R}Q$.

The second inequality in (2) follows from the first one, given (1) and the isomorphism $\Hom RPQ\cong P^{*}\otimes_{R}Q$.
Next we verify the first inequality.  There is nothing to prove unless the levels of $P$ and $Q$ are finite.  
Thus we may assume that $P$ is a retract of a complex $P'$ with a semiprojective filtration of length  
$l=\level RP$ and $Q$ is a retract of a complex $Q'$ with a semiprojective filtration of length $m=\level RQ$.  
Then $P\otimes_{R}Q$ is a retract of $P'\otimes_{R}Q'$, and---by what we have just seen---this complex has a 
semiprojective filtration of length $l+m-1$, as desired.
  \end{proof}

\subsection{Ghost maps}
\label{ssec:ghosts}
A \emph{ghost} is a morphism $g\colon X\to Y$ in $\dcat R$ such that $\hh g=0$; see \cite[\S8]{Ch}. Evidently a composition of morphisms one of which is ghost is a ghost. 

The next result is a version of the ``Ghost Lemma''; cf.~\cite[Theorem~8.3]{Ch}, \cite[Lemma~4.11]{Roq}, and \cite[Proposition~2.9]{ABI}.

\begin{lemma}
\label{lem:ghost}
Let $F$ be an $R$-complex and $c$ an integer with $c\ge\level RF$.

When $g\colon X\to Y$ is a composition of $c$ ghosts the following morphisms are~ghosts
\begin{align*}
F\lotimes Rg &\colon F\lotimes RX\lra F\lotimes RY \quad\text{and}\\
\Rhom RFg &\colon \Rhom RFX\lra \Rhom RFY 
\end{align*}
  \end{lemma}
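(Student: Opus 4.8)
The plan is to reduce the statement to the defining property of level together with the two-out-of-three behavior of ghosts. First I would use that $\level RF \le c$ means $F$ is a retract in $\dcat R$ of a complex $P$ admitting a length $c$ semiprojective filtration $0 = \filt 0P \subseteq \filt 1P \subseteq \cdots \subseteq \filt cP = P$; since $F\lotimes R{-}$ and $\Rhom RF{-}$ send retracts to retracts, and a retract of a ghost is a ghost, it suffices to prove the claim with $P$ in place of $F$. So I may assume $F=P$ has a semiprojective filtration of length $c$, with subquotients $\filt iP/\filt{i-1}P$ having zero differential, hence being (suspensions of) finite projective modules, i.e. objects of $\dcat R$ on which every ghost acts as the zero morphism in the strongest sense.

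The key step is an induction on $c$ using the filtration. Write $g = g_c\circ\cdots\circ g_1$ as a composition of $c$ ghosts $g_i\colon X_{i-1}\to X_i$ with $X_0=X$, $X_c=Y$. For the tensor statement, I would consider the exact triangles
\[
\filt{i-1}P \lra \filt iP \lra \filt iP/\filt{i-1}P \lra \shift\filt{i-1}P
\]
in $\dcat R$. Applying $-\lotimes R X_j$ to these and chasing the resulting diagram, one shows by induction on $i$ that $(\filt iP)\lotimes R{g_i\circ\cdots\circ g_1}$ is ghost: the base case $i=1$ holds because $\filt 1P$ has zero differential, so it is a sum of shifts of finite projectives, and tensoring a single ghost $g_1$ with such a complex kills homology; for the inductive step, the triangle above gives a compatibility between the maps on $\filt{i-1}P\lotimes R{-}$, on $(\filt iP/\filt{i-1}P)\lotimes R{-}$, and on $\filt iP\lotimes R{-}$, and since the first factors through $i-1$ ghosts and the second through one ghost, a diagram chase (the standard Ghost Lemma argument) shows the middle one factors so as to be ghost after composing with the remaining ghosts. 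After $c$ steps one reaches $P\lotimes Rg$, which is therefore ghost. The $\Rhom$ statement follows either by the identical argument applied to $\Rhom RPX\cong P^*\otimes_R X$ using that $\level R{P^*}=\level RP$ by Lemma~\ref{lem:levels}(1), or directly by running the same triangle induction with $\Rhom R{-}{X_j}$ in place of $-\lotimes RX_j$ (contravariance merely reverses the triangles).

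The main obstacle is the diagram chase in the inductive step: one must verify that the connecting maps in the triangles are compatible with the ghosts $g_i$ in a way that lets the factorizations through fewer ghosts propagate. Concretely, given that $\filt{i-1}P\lotimes R(g_{i-1}\cdots g_1)$ is ghost and $(\filt iP/\filt{i-1}P)\lotimes Rg_i$ is ghost, one wants $\filt iP\lotimes R(g_i\cdots g_1)$ to be ghost. This is exactly the content of \cite[Lemma~4.11]{Roq} / \cite[Proposition~2.9]{ABI} applied levelwise, and the argument is the usual one: factor $g_i\cdots g_1$ through the triangle, observe that the component landing in the subquotient is killed after one more ghost while the component coming from the subcomplex is handled by the inductive hypothesis, and conclude that the composite induces zero on homology. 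Apart from keeping track of signs and suspensions, no step is technically deep; the work is organizational.
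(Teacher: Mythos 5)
Your overall plan mirrors the paper's: reduce to the case where $F$ itself carries a finite semiprojective filtration (the retract step, which the paper leaves implicit), then run an induction over the filtration with a diagram chase on the long exact sequences coming from the short exact sequence of subquotients, and finally reduce $\Rhom RF{-}$ to $F^*\otimes_R{-}$ (the paper uses the equivalent isomorphism $\Rhom RFR\lotimes RW\simeq\Rhom RFW$; your invocation of Lemma~\ref{lem:levels}(1) makes explicit the fact that $\level R{F^*}=\level RF$, which the paper uses tacitly). So the skeleton is right.

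The gap is in the inductive step, where the matching of filtration pieces to ghosts matters and yours is set up the wrong way round. You take the short exact sequence $0\to\filt{i-1}P\to\filt iP\to\filt iP/\filt{i-1}P\to 0$ and propose to combine (a) $\filt{i-1}P\lotimes R(g_{i-1}\cdots g_1)$ is ghost by induction with (b) $(\filt iP/\filt{i-1}P)\lotimes Rg_i$ is ghost because the quotient has zero differential, to conclude $\filt iP\lotimes R(g_i\cdots g_1)$ is ghost. But run the chase: given $z\in\hh{\filt iP\otimes X_0}$, set $z'=\hh{\filt iP\otimes(g_{i-1}\cdots g_1)}(z)$. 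Because the image of $z'$ in $\hh{(\filt iP/\filt{i-1}P)\otimes X_{i-1}}$ is $\hh{(\filt iP/\filt{i-1}P)\otimes(g_{i-1}\cdots g_1)}(\bar z)=0$, the class $z'$ lifts to some $w\in\hh{\filt{i-1}P\otimes X_{i-1}}$. To finish you would need $\hh{\filt{i-1}P\otimes g_i}(w)$ to vanish, or at least die after $\iota_*$; but $w$ is an arbitrary lift, not in the image of the previous maps, and $\filt{i-1}P$ has level $i-1>1$, so a \emph{single} ghost $g_i$ gives you nothing. The chase does not close.

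There are two correct ways to set up the induction, and you should pick one. The paper's version peels off $\filt 1F$ (the level-one piece) as a \emph{subcomplex} and matches it with the \emph{last} ghost: the quotient $F/\filt 1F$ has level $\le c-1$, so by induction $\hh{(F/\filt 1F)\otimes f}=0$ for the first $c-1$ ghosts $f$; exactness then forces $\Image\hh{F\otimes f}\subseteq\Image\hh{\iota\otimes W}$, and composing with the last ghost $h$ kills this image because $\hh{\filt 1F\otimes h}=0$. The symmetric version, closer to your filtration indexing, matches the level-one \emph{quotient} $\filt iP/\filt{i-1}P$ with the \emph{first} ghost $g_1$ and the subcomplex $\filt{i-1}P$ with the remaining $i-1$ ghosts $g_i\cdots g_2$: then $z'=\hh{\filt iP\otimes g_1}(z)$ lifts to $w\in\hh{\filt{i-1}P\otimes X_1}$, and the inductive hypothesis (applied to the level-$(i-1)$ complex $\filt{i-1}P$ and the $(i-1)$-fold composition $g_i\cdots g_2$) kills $w$, hence kills $\hh{\filt iP\otimes(g_i\cdots g_1)}(z)=\iota_*\bigl(\hh{\filt{i-1}P\otimes(g_i\cdots g_2)}(w)\bigr)$. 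Either works; your version, with the level-one quotient eating the last ghost, does not.

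One small further point: your induction is really on $c$ (the number of ghosts, equivalently the level bound), with $F$ allowed to vary, rather than on the filtration index $i$ of a fixed $P$ with a fixed composition $g_c\cdots g_1$. Stating it that way avoids the bookkeeping ambiguity about which sub-composition of ghosts the inductive hypothesis applies to.
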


\begin{proof}
For every $R$-complex $W$ there is a canonical isomorphism
\[
\Rhom RFR\lotimes RW \xra{\ \simeq\ } \Rhom RFW\,,
\]
so it suffices to prove the first assertion.  For that, we may assume that $F$ has a semiprojective filtration $\{\filt nF\}_{n=0}^{l}$, where $l=\level RF$.  By hypothesis,  $g=h\circ f$ where $f\colon X\to W$  is a $(c-1)$fold composition of ghosts and $h\colon W\to Y$ is a ghost.  Tensoring these maps with the exact sequence of $R$-complexes
\[
0\lra \filt 1F \xra{\ \iota\ } F \xra{\ \pi\ }G \lra 0
\]
where $G:=F/\filt 1F$, yields a commutative diagram of graded $R$-modules
\[
\xymatrixcolsep{3pc}
\xymatrix{
\hh{ \filt 1F \otimes_{R} X} \ar@{->}[r] \ar@{->}[d]_{\hh{\filt 1F\otimes f}}
	& \hh{F \otimes_{R} X} \ar@{->}[r] \ar@{->}[d]^{\hh{F\otimes f}}
			& \hh{G \otimes_{R}X} \ar@{->}[d]^{\hh{G\otimes f}} \\
\hh{\filt 1F \otimes_{R} W} \ar@{->}[r]^{\hh{\iota\otimes W}} \ar@{->}[d]_{\hh{\filt 1F\otimes h}}
	& \hh{F \otimes_{R} W} \ar@{->}[r]^{\hh{\pi\otimes W}}  \ar@{->}[d]^{\hh{F\otimes h}}
			& \hh{G\otimes_{R}W} \ar@{->}[d]  \\
\hh{ \filt 1F \otimes_{R} Y} \ar@{->}[r]_{\hh{\iota\otimes Y}} & \hh{F \otimes_{R} Y} \ar@{->}[r] & \hh{G\otimes_{R}Y}
}
\]
where the rows are exact. Since $\level RG\le l-1\le c-1$, the induction hypothesis implies $G\otimes f$ is a ghost; 
that is to say, $\hh{G\otimes f}=0$. The commutativity of the diagram above and the exactness of the middle row implies that 
\[
\Image \hh{F\otimes f} \subseteq \Image\hh{\iota \otimes W}
\] 
This entails the inclusion below.
\begin{align*}
\Image \hh{F\otimes g} 
	& = \hh{F\otimes h}(\Image \hh{F\otimes f})\\
	& \subseteq \hh{F\otimes h}(\Image\hh{\iota \otimes W}) \\
	& \subseteq \Image \hh{\iota \otimes Y} \hh{\filt 1F\otimes h}) \\
	&=0
\end{align*}
The second equality comes  from the commutativity of the diagram. The last one holds because $\filt 1F$ 
is graded-projective and $\hh h=0$ imply $\hh{\filt 1F\otimes h}=0$. 
\end{proof}

\subsection{Koszul complexes}
\label{ssec:koszul}
Let $\bsx:=x_{1},\dots,x_{n}$ be elements in $R$. 

We write $K(\bsx)$ for the Koszul complex on $\bsx$. Thus $\nat {K(\bsx)}$ is the exterior algebra on 
a free $R$-module $K(\bsx)_1$ with basis $\{\wt x_1,\dots,\wt x_n\}$, and $\dd^K$ is the unique $R$-linear 
map that satisfies the Leibniz rule and has $\dd(\wt x_{i})=x_{i}$ for $i=1,\dots,n$.  In particular, $K(\bsx)$ is
a DG (differential graded) algebra, and so its homology $\hh{K(\bsx)}$ is a graded algebra with 
$\HH 0{K(\bsx)}=R/(\bsx)$.  This implies $(\bsx)\hh{K(\bsx)}=0$.

Evidently $K(\bsx)$ is a perfect $R$-complex; it is indecomposable when $R$ is local; see~\cite[4.7]{AGS}. As $K(\bsx)_i$ is 
non-zero precisely for $0,\dots, n$, from \eqref{eq:level-span} one gets
\[
\level R{K(\bsx)}\leq \cspan K(\bsx) = n+1\,.
\]
Equality holds if $R$ is local and $\bsx$ is a system of parameters; see Theorem~\ref{thm:init} below. 
However, $\cspan K(\bsx) - \level R{K(\bsx)}$ can be arbitrarily large; see \cite[Section~3]{AGS}.

For any Koszul complex $K$ on $n$ elements, there are isomorphisms of $R$-complexes
\[
K^{*}\cong \shift^{-n}K \quad\text{and}\quad K\otimes_{R}K \cong \bigoplus_{i=0}^{n}\shift^{i} K^{\binom ni}\,.
\]
See \cite[Propositions 1.6.10 and 1.6.21]{BH}.  It thus follows from \eqref{eq:level-props} that 
\begin{equation}
\label{eq:koszul-product}
\level R{\Hom RKK} = \level R{(K\otimes_{R}K)}=  \level RK\,.
\end{equation}
In particular, the inequalities in Lemma~\ref{lem:levels}(2) can be strict.

\section{Tensor nilpotent morphisms}
\label{sec:nilpotence}

In this section we prove the Tensor Nilpotence Theorem announced in the introduction.  We start by reviewing the properties of interest. 

\subsection{Tensor nilpotence}
\label{ssec:tensor-nilpotent}
Let $f\colon Y\to X$ be a morphism in $\dcat R$.

The morphism $f$ is said to be \emph{tensor nilpotent} if for some $n\in \bbN$ the morphism 
\[
\underbrace{f\lotimes R\cdots \lotimes Rf}_{n} \colon {Y\lotimes R\cdots \lotimes RY} \lra {X\lotimes R\cdots \lotimes RX}
\]
is equal to zero in $\dcat R$; when the $R$-complexes $X,Y$ are perfect this means that the morphism $\otimes^{n}f\colon \otimes^{n}_{R}Y\to\otimes^{n}_{R} X$ is homotopic to zero.  When $X$ is perfect and $f\colon X\to\shift^{l}X$ is a morphism with $\otimes^{n}f$ homotopic to zero the $n$-fold composition 
\[
X\lra \shift^{l}X \xra{\ \shift^{l}f\ } \shift^{2n}X \xra{\ \shift^{2l}f\ }\cdots \xra{\ \shift^{nl}\ } \shift^{nl}X
\]
is also homotopic to zero. The converse does not hold, even when $R$ is a field for in that case tensor nilpotent morphisms are zero.

\subsection{Fiberwise zero morphisms}
\label{ssec:fiberwise zero}
A morphism $f\colon Y\to X$ that satisfies 
\[
k(\fp)\lotimes Rf=0\quad\text{in}\quad \dcat{k(\fp)} \quad\text{for every $\fp\in\Spec R$}
\]
is said to be \emph{fiberwise zero}.  This is equivalent to requiring $k\lotimes Rf=0$ in $\dcat k$ for every homomorphism $R\to k$ with $k$ a field.  
In $\dcat k$ a morphism is zero if (and only if) it is a ghost, so the latter condition is equivalent to $\hh{k\lotimes Rf}=0$.

In $\dcat k$, a morphism is tensor nilpotent exactly when it is zero. Thus if $f$ is tensor nilpotent, it is fiberwise zero.  There is a partial converse:  \emph{If a morphism $f\colon G\to F$  of perfect $R$-complexes is  fiberwise zero, then it is tensor nilpotent.}  This was proved by Hopkins~\cite[Theorem~10]{Hop} and Neeman~\cite[Theorem~1.1]{Ne}.


The next result is the Tensor Nilpotence Theorem from the Introduction. Recall that an $R$-module is said to be $I$-torsion if each one of its elements is annihilated by some power of $I$. 

\begin{theorem}
\label{thm:fiberwisezero}
Let $R$ be a commutative noetherian ring and $f\colon G\to F$ a morphism of perfect $R$-complexes. 
If for some ideal $I$ of $R$ the following conditions hold
  \begin{enumerate}[\quad\rm(1)]  
\item
$f$ factors through some complex with $I$-torsion homology, and
\item
$\level{R}{\Hom RGF}\le \height I$\,,
\end{enumerate}
then $f$ is fiberwise zero. In particular, $f$ is tensor nilpotent.
  \end{theorem}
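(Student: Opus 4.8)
The plan is to reduce, by localization, to the case where $R$ is local with maximal ideal $\fm$, and then to show that $k\lotimes_R f=0$ for $k=R/\fm$. Since being fiberwise zero is a condition that can be checked one field $R\to k$ at a time, and each such map factors through a residue field $k(\fp)$ of a localization $R_\fp$, I would first replace $R$ by $R_\fp$; note that $\Hom RGF$ localizes correctly and that level can only drop under localization (a semiprojective filtration base-changes to one), so hypothesis (2) is inherited. Likewise the factorization in (1) persists after localization, with $IR_\fp$-torsion homology. So it suffices to treat a local ring $(R,\fm,k)$ and prove $\hh{k\lotimes_R f}=0$. If $\fp$ does not contain $I$ then $IR_\fp=R_\fp$, the torsion complex is acyclic, so $f$ itself is zero in $\dcat{R_\fp}$ and there is nothing to prove; hence I may assume $I\subseteq\fm$, so that $\height I\le \dim R$.

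The heart of the argument is to produce, from hypothesis (1), a factorization of $f$ in $\dcat R$ as a composition of $\height I+1$ ghosts — or rather of $c$ ghosts for some $c>\height I\ge \level R{\Hom RGF}$ — after which Lemma~\ref{lem:ghost} finishes the job. Indeed, once $f=g_c\circ\cdots\circ g_1$ with each $g_i$ a ghost and $c\ge\level R{\Hom RGF}$, apply Lemma~\ref{lem:ghost} with $F$ there taken to be $\Hom RGF\simeq G^*\otimes_R F$ — more precisely, observe that $f\in\Hom{\dcat R}GF\cong \HH 0{\Rhom RGF}$ corresponds under \eqref{eq:complexes1} to a morphism $R\to\Rhom RGF$ in $\dcat R$, and composing a ghost with anything is a ghost, so one arranges that the relevant composite of length $c$ applied through $\Rhom RGF$ is itself a ghost; this forces $f=0$ in $\dcat R$ already, hence certainly $k\lotimes_R f=0$. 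The cleanest route: it is enough to show $f$ is a composition of $\height I + 1$ ghosts in $\dcat R$, because $\height I+1>\height I\ge \level R{\Hom RGF}$, and then the Ghost Lemma applied to $\Rhom RGF$ kills $f$ outright.

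The construction of the ghost factorization is where the Koszul complex and big Cohen-Macaulay modules enter. Write $\bsx=x_1,\dots,x_d$ for a system of parameters of $R$ contained in $I$ with $d\ge\height I$ (one exists up to adjusting: pick a maximal regular-ish sequence inside $I$; more carefully, since $\height I=h$, choose $x_1,\dots,x_h\in I$ with $\height(x_1,\dots,x_h)=h$). Let $C$ be a big Cohen-Macaulay $R$-module for which (some subsystem of) $\bsx$ is a regular sequence; such $C$ exists by André's theorem, and this is exactly the role flagged in the Introduction and in Proposition~\ref{prp:BCM}. The point is that $f$ factors through a complex $Z$ with $\hh Z$ being $I$-torsion, hence $(\bsx)$-power-torsion; on homology this means each class is killed by a power of the ideal $(\bsx)$. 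I would feed this into a Koszul-filtration argument: the Koszul complex $K(\bsx)$ has $\cspan K(\bsx)=d+1$, its homology is annihilated by $(\bsx)$, and tensoring the factorization $G\to Z\to F$ against $C$ and against the $K(\bsx)$-resolution data produces a string of $d$ (or $d+1$) maps each of which induces zero on homology — because at each stage the connecting piece involves multiplication by some $x_i$ on a module over which $\bsx$ is regular, so the relevant $\tor$ or $\Ext$ vanishes. The big Cohen-Macaulay module is precisely what converts the depth-theoretic input ``$\height I\ge$ something'' into an honest vanishing of homology at each of the $\height I$ steps, and assembling these vanishings gives $f$ as a composition of $\height I+1$ ghosts.

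The main obstacle, I expect, is making the ghost factorization of length exactly $\height I+1$ rigorous: one must pass from the mere existence of a factorization $G\to Z\to F$ with $I$-torsion homology to an honest $(\height I+1)$-fold ghost decomposition in $\dcat R$. Knowing $\hh Z$ is $I$-torsion does not by itself stratify $Z$ into $\height I$ ghost-layers; one needs to build the layers using a system of parameters inside $I$ together with the regularity of $\bsx$ on the big Cohen-Macaulay module $C$ — and one must be careful that the construction takes place in $\dcat R$ and that the count of ghosts is $\height I+1$ and not $\depth$ of something or $\cspan$ of something, since the whole strength of the theorem lies in that numerical precision. A secondary technical point is the localization step: one must confirm that under $R\to R_\fp$ both hypotheses survive with $IR_\fp$ in place of $I$ and that $\height(IR_\fp)$ does not exceed what is needed — but since we only need $\level{R_\fp}{\Hom{R_\fp}{G_\fp}{F_\fp}}\le\level R{\Hom RGF}\le\height I$ and $\height I\le\height(IR_\fp)+\dim(R/\fp)$ is the wrong inequality, the correct move is not to track $\height(IR_\fp)$ but simply to use that the conclusion ``fiberwise zero'' is detected after localization and to run the ghost construction directly over $R$, concluding $f=0$ in $\dcat R$, which is stronger and sidesteps the issue.
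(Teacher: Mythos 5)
Your plan has the right ingredients --- localize to a local ring, then kill $f$ using the Ghost Lemma with big Cohen-Macaulay modules supplying the ghosts --- but the conclusion you announce is too strong, and the ghost factorization is attached to the wrong object. You close by ``concluding $f=0$ in $\dcat R$'', but this does not follow from the hypotheses. Take $R=k[[x,y]]/(xy,y^2)$, $G=F=R$, $f$ equal to multiplication by $y$, and $I=\fm=(x,y)$: then $\height\fm=\dim R=1$ and $\level R{\Hom RRR}=\level RR=1$, while $f$ factors through the submodule $yR$, which is killed by $\fm$ and hence $\fm$-torsion; both hypotheses hold, yet $f$ is a nonzero map of modules and so $f\ne0$ in $\dcat R$. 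Of course $f$ is fiberwise zero, as the theorem asserts --- but no argument whose end product is $f=0$ in $\dcat R$ can be right, and in particular $f$ is never exhibited as a composition of ghosts.

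The paper's proof gets around this by two moves you omit. First, Lemma~\ref{lem:fiberwise} replaces $f\colon G\to F$ by $f'\colon F^*\otimes_R G\to R$; since $\Hom RGF\cong(F^*\otimes_R G)^*$ and level is preserved by duality, hypothesis~(2) becomes a bound on the level of the \emph{source} of $f'$ --- and it is the level of the source, not of $\Hom RGF$, that Lemma~\ref{lem:ghost} needs. Second, and this is the crux, the ghost tower is not built from $f'$ nor from any torsion complex through which $f$ factors: it is built from the canonical map $t\colon\lch_{I}C\to C$ into a big Cohen-Macaulay module $C$, which Proposition~\ref{prp:BCM}(1) writes as a composition of $\height I$ ghosts precisely because a length-$(\height I)$ parameter sequence inside $I$ is $C$-regular. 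One post-composes $f'$ with a noncanonical $\gamma\colon R\to C$ chosen so that $R\to C\to k$ is the residue map, factors $\gamma f'$ through $t$ by the universal property of $\lch_{I}$, applies the Ghost Lemma to get $\gamma f'=0$, and only then reads off $\hh{k\otimes_R f'}=0$. Your sketch conflates the torsion factorization $G\to Z\to F$, which carries no ghost structure by itself, with the honest ghost tower on $\lch_{I}C$; supplying the latter and settling for the weaker fiberwise conclusion is precisely what is missing. (The localization worry in your last paragraph is not real: for $\fp\supseteq I$ one has $\height I\le\height I_{\fp}$, not the reverse, so the reduction to the local case goes through cleanly.)
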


The proof of the theorem is given after Proposition \ref{prp:BCM}.

\begin{remark}
\label{rem:optimal}
Lemma~\ref{lem:levels} shows that the inequality (2) is implied by 
\[
 \level RF+\level RG\le \height I+1\,;
\]
the converse does not hold; see \eqref{eq:koszul-product}.  

On the other hand, condition (2) cannot be weakened: 
Let $(R,\fm,k)$ be a local ring and $G$ the  Koszul complex on some system of parameters of $R$ and  let  
\[
f\colon G\lra (G/G_{\les d-1}) \cong \shift^{d}R
\]
be the canonical surjection with $d=\dim R$. Then $G$ is an $\fm$-torsion complex and $\level {R}G=d+1$; 
see \ref{ssec:koszul}.  Evidently $\hh{k\otimes_{R}f}\ne 0$, so $f$ is not fiberwise zero. 
\end{remark}

In the proof of Theorem~\ref{thm:fiberwisezero} we exploit the functorial nature of $I$-torsion.

  \subsection{Torsion complexes}
     \label{ssec:torsion-complexes}
The derived $I$-torsion functor assigns to every $X$ in $\dcat R$ an $R$-complex $\lch_{I}X$; when $X$ is a module it computes its local cohomology: $\LC nIX=\HH {-n}{\lch_{I}X}$ holds for each integer $n$. There is a natural morphism $t\colon \lch_{I}X \lra X$ in $\dcat R$ that has the following universal property:  Every morphism  $Y\to X$ such that $\hh Y$ is $I$-torsion factors uniquely through $t$; see Lipman~\cite[Section~1]{Li}. It is easy to verify that the following conditions are equivalent.
\begin{enumerate}[\quad\rm(1)]
\item
$\hh X$ is $I$-torsion.
\item
${\hh X}_{\fp}=0$ for each prime ideal $\fp\not\supseteq I$.
\item
The natural morphism $t\colon \lch_{I}X\to X$ is a quasi-isomorphism.
\end{enumerate}

When they hold, we say that $X$ is \emph{$I$-torsion}.  Note a couple of properties:
\begin{align}
\label{eq:I-torsion-tensor}
&\text{If  $X$ is $I$-torsion, then $X\lotimes{R}Y$ is $I$-torsion  for any $R$-complex $Y$.}\\
\label{eq:I-torsion-monoidal}
&\text{There is a natural isomorphism } \lch_{I}(X\lotimes RY)\cong (\lch_{I}X)\lotimes RY\,.
 \end{align}

Indeed, $\hh {X_{\fp}}\cong {\hh X}_{\fp}=0$ holds for each $\fp\not\supseteq I$, giving $X_{\fp}=0$ in $\dcat R$. Thus
\[
(X\lotimes{R}Y)_{\fp}\cong X_{\fp}\lotimes {R} Y \cong 0
\] 
holds in $\dcat{R}$.  It yields $\hh{X\lotimes{R}Y}_{\fp}\cong \hh{(X\lotimes{R}Y)_{\fp}} =0$, as desired.

A proof of the  isomorphism in \eqref{eq:I-torsion-monoidal} can be found in \cite[3.3.1]{Li}.

\subsection{Big Cohen-Macaulay modules}
\label{ssec:BCM}
Let $(R,\fm,k)$ be a local ring.   

A (not necessarily finitely generated) $R$-module $C$ is \emph{big Cohen-Macaulay} if every system of  parameters for $R$ is a $C$-regular sequence, in the sense of  \cite[Definition 1.1.1]{BH}. In the literature the name is sometimes used for $R$-modules $C$ 
that satisfy the property for \emph{some} system of parameters for $R$; however, the $\fm$-adic completion of $C$   is then big Cohen-Macaulay in the sense above; see~\cite[Corollary~8.5.3]{BH}.

The existence of big Cohen-Macaulay was proved by Hochster~\cite{Hoc1,Hoc2} in case when $R$ contains a field as a subring, and by  Andr\'e~\cite{An} when it does not; for the latter case, see also Heitmann and Ma~\cite{HM}.

In this paper, big Cohen-Macaulay modules are visible only in the next result. 

 \begin{proposition} 
  \label{prp:BCM}
Let $I$ be an ideal in $R$ and set $c:=\height I$. 

When $C$ is a big Cohen-Macaulay $R$-module the following assertions hold.
\begin{enumerate}[\quad\rm(1)]
\item
The canonical morphism $t\colon \lch_{I}C \to C$ from the $I$-torsion complex $\lch_{I}C$ 
(see \emph{\ref{ssec:torsion-complexes}}) is a composition of $c$ ghosts. 
\item
If a morphism $g\colon G\to C$ of $R$-complexes with $\level RG\le c$ factors through some $I$-torsion complex, then $g=0$.
\end{enumerate}
  \end{proposition}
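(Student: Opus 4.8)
The plan is to prove both parts of Proposition~\ref{prp:BCM} by exploiting the depth-sensitivity of big Cohen-Macaulay modules, expressed via local cohomology, together with the Ghost Lemma (Lemma~\ref{lem:ghost}).

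First I would establish part (1). Choose a maximal $C$-regular sequence $x_1,\dots,x_c$ inside $I$; this exists because $c=\height I$ and $C$ is big Cohen-Macaulay, so a system of parameters of $R$ refining any such sequence is $C$-regular, whence $\operatorname{depth}_I C\ge c$, and in fact $=c$ since $\LC cIC\ne 0$ by Grothendieck vanishing considerations together with the height bound. Actually for part (1) only the inequality $\operatorname{depth}_I C\ge c$ is needed. Set $\bsx:=x_1,\dots,x_c$. Because $(\bsx)$ and $I$ have the same radical, $\lch_{(\bsx)}C\simeq\lch_I C$, and $\lch_{(\bsx)}C$ is computed by the (extended) Koszul—that is, the stable Koszul or \v{C}ech—complex $\check{C}(\bsx)\otimes_R C$. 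The key point is that this \v{C}ech complex, concentrated in cohomological degrees $0$ through $c$, has homology vanishing in all degrees $<c$ precisely because $\bsx$ is a $C$-regular sequence; so after the quasi-isomorphism $\lch_I C\simeq\check{C}(\bsx)\otimes_R C$ we have a complex whose homology is concentrated in a single degree. I would then factor the canonical map $t\colon\lch_I C\to C$ through the truncations (soft or hard) of this \v{C}ech complex: writing $W$ for $\check{C}(\bsx)\otimes_R C$ viewed homologically, the stupid filtration by the $c+1$ terms realizes $t$ as a composite $\lch_I C\simeq W \to \cdots \to C$ of $c$ maps, each of which is a ghost because the homology of $W$ lives only in the bottom degree, so passing one term at a time kills homology. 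This is the standard "a complex supported in $[0,c]$ with homology only at one end factors into $c$ ghosts" argument; I expect the bookkeeping with the signs and the direction of the \v{C}ech differential to be the fiddly part, but nothing deep.

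Next, part (2). Suppose $g\colon G\to C$ factors through an $I$-torsion complex $Z$, say $g=G\xrightarrow{a} Z\xrightarrow{b} C$. Since $\hh Z$ is $I$-torsion, the universal property of $t\colon\lch_I C\to C$ recalled in \ref{ssec:torsion-complexes} shows that $b$ factors through $t$, so $g$ factors as $G\to\lch_I C\xrightarrow{t}C$. By part (1), $t$ is a composition of $c$ ghosts; precomposing with $G\to\lch_I C$ we may regard $g$ itself as a composition of $c$ ghosts (extending the factorization on the left by that one extra map, which is harmless since the property of being a composite of $c$ ghosts is preserved under further precomposition). Now I would like to conclude $g=0$ directly, but a composite of $c$ ghosts need not be zero in general—that is exactly what the Ghost Lemma is for, and it requires tensoring with a complex of level $\le c$. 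So instead I apply $\Rhom RG{-}$, or rather use that $g$ being a composite of $c$ ghosts and $\level RG\le c$ lets us invoke Lemma~\ref{lem:ghost} with $F:=G$: the map $\Rhom RG g$ is then a ghost. But $\Rhom RG{-}$ applied to the composite $G\xrightarrow{g}C$ contains the identity of $G$ in its source, so the image of $\hh{\idmap_G}$ under $\hh{\Rhom RG g}$ is $\hh g$ pushed into $\hh{\Rhom RGC}$; more precisely, naturality of the evaluation/identity element shows that $g$ itself equals the image of $\idmap_G\in\HH 0{\Rhom RGG}$ under the map induced by $g$, and that map is $\hh{\Rhom RG g}$ composed with a canonical iso, hence zero. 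Therefore $\hh g=0$ as an element of $\Hom{\dcat R}GC$, i.e.\ $g=0$.

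The main obstacle I anticipate is the last step—cleanly extracting "$g=0$" from "$g$ is a composite of $c$ ghosts and $\level RG\le c$" via the Ghost Lemma. The clean way is: apply Lemma~\ref{lem:ghost} with its $F$ taken to be our $G$, so that $\Rhom RG g\colon\Rhom RGG\to\Rhom RGC$ is a ghost; then use the isomorphism \eqref{eq:complexes1} identifying $\HH 0{\Rhom RGC}$ with $\Hom{\dcat R}GC$ together with the fact that $g$ is the image of the identity morphism under the map $\Hom{\dcat R}GG\to\Hom{\dcat R}GC$ induced by $g$, which corresponds under \eqref{eq:complexes1} to $\HH 0{\Rhom RG g}$; since the latter is zero, $g=0$. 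The only thing to be careful about is matching the homological degree in \eqref{eq:complexes1} (take $n=0$) and checking that the identity of $G$ really does map to $g$, which is immediate from functoriality of $\Rhom RG{-}$.
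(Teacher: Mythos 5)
Your proof of part (2) is essentially the paper's own: factor $g$ through $t\colon\lch_IC\to C$ via the universal property, apply Lemma~\ref{lem:ghost} with its ``$F$'' taken to be $G$ (the paper applies the lemma to $t$ alone rather than to the full composite $g$, but this is an immaterial variant), and read off $g=0$ from the fact that $g$ is the image of $\idmap_G$ under $\HH 0{\Rhom RG{(-)}}$, using \eqref{eq:complexes1}.

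For part (1) there is a genuine gap in the reduction step. You claim that after choosing a $C$-regular sequence $\bsx=x_1,\dots,x_c$ in $I$ one has $\sqrt{(\bsx)}=\sqrt I$, and hence $\lch_{(\bsx)}C\simeq\lch_IC$. This is false in general. For instance in $R=k[[x,y,z]]$ with $I=(xy,xz)$ one has $\height I=1$ and $\sqrt I=(x)\cap(y,z)$, but $(y,z)$ has height $2$ and therefore cannot be an associated prime of any principal ideal in this (Cohen--Macaulay) ring; so no single element of $I$ has the same radical as $I$. The correct justification for the reduction is different and does not require equal radicals: since $(\bsx)\subseteq I$, any $I$-torsion module is $(\bsx)$-torsion, so $\lch_IC$ is $(\bsx)$-torsion and, by the universal property recalled in \ref{ssec:torsion-complexes}, the morphism $t_I\colon\lch_IC\to C$ factors as $\lch_IC\to\lch_{(\bsx)}C\xrightarrow{t_{(\bsx)}}C$. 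It therefore suffices to show $t_{(\bsx)}$ is a composition of $c$ ghosts, since precomposition preserves that property. This is exactly what the paper's phrase ``we may assume $I=(\bsx)$'' stands for. With that fix in place, your use of the \v{C}ech (stable Koszul) complex $\check C(\bsx)\otimes_RC$ and its brutal truncations is a perfectly valid alternative to the paper's device, which instead factors $t_{(\bsx)}$ through the tower $\lch_{(x_1,\dots,x_c)}C\to\lch_{(x_1,\dots,x_{c-1})}C\to\cdots\to\lch_{(x_1)}C\to C$; both routes rest on the same observation that $C$-regularity of $\bsx$ concentrates the relevant homology in a single degree, so the successive maps are ghosts.
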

   
   \begin{proof} 
(1)  We may assume $I=(\bsx)$, where $\bsx=\{x_{1},\dots,x_{c}\}$ is part of a system of parameters for $R$; 
see \cite[Theorem~A.2]{BH}.  The morphism $t$ factors as
   \[
\lch_{(x_{1},\dots,x_{c})}(C) \lra \lch_{(x_{1},\dots,x_{c-1})}(C) \lra \cdots \lra \lch_{(x_{1})}(C) \lra C\,.
   \]
Since the sequence $x_{1},\dots,x_{c}$ is $C$-regular, we have $\HH i{\lch_{(x_{1},\dots,x_{j})}(C)}=0$ for $i\ne -j$; see 
\cite[(3.5.6) and (1.6.16)]{BH}.  Thus every one of the arrows above is a ghost, so that $t$ is a composition of $c$ ghosts, as desired.

(2)  Suppose $g$ factors as $G\to X\to C$ with $X$ an $I$-torsion $R$-complex. As noted in \ref{ssec:torsion-complexes}, the morphism $X\to C$ factors through $t$, so $g$ factors as
\[
G\xra{g'} X \xra{g''} \lch_{I}C \xra{t} C\,.
\]
In view of the hypothesis $\level {R}G\le c$ and part (1), Lemma~\ref{lem:ghost} shows that   
  \[
\Rhom RG{t} \colon \Rhom RG{\lch_{I}C}\lra \Rhom RGC
  \]
is a ghost.  Using brackets to denote cohomology classes, we get 
  \[
[g]=[tg''g']=\HH 0{\Rhom RG{t}}([g''g'])=0\,.
  \]  

Due to the isomorphism \eqref{eq:complexes1}, this means that $g$ is zero in $\dcat R$.
   \end{proof}

\begin{lemma} 
  \label{lem:fiberwise}
Let $f\colon G\to F$ be a morphism of perfect $R$-complexes, where $G$ is finite free with $G_{i}=0$ for $i\ll 0$ 
and $F$ is perfect. Let $f'\colon F^{*}\otimes_{R} G\to R$ denote the composed morphism in the next display,
where $e$ is the evaluation map:
  \begin{equation*}
F^{*}\otimes_{R} G\xra{\ F^{*}\otimes_{R}f\ } F^{*}\otimes_{R} F\xra{\ e\ } R\,.
  \end{equation*}

If $f$ factors through some $I$-torsion complex, then so does $f'$.

The morphism $f'$ is fiberwise zero if and only if so is $f$.
  \end{lemma}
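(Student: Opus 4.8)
The plan is to exploit the standard adjunction identifications relating $F^{*}\otimes_{R}(-)$ with $\Hom RF{-}$ and, dually, to recognize $f'$ as the image of $f$ under a natural transformation that preserves both "factoring through an $I$-torsion complex" and "being fiberwise zero". Concretely, the morphism $f\colon G\to F$ corresponds under the isomorphism $\Hom{\dcat R}{G}{F}\cong\Hom{\dcat R}{F^{*}\otimes_{R}G}{R}$ (which holds because $F$ is perfect, so $F\cong F^{**}$ and $F^{*}\otimes_{R}G\cong\Hom RGF^{*}{}^{*}$, and evaluation is the counit of the $\otimes$--$\Hom$ adjunction) precisely to the composite $e\circ(F^{*}\otimes_{R}f)=f'$. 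So $f\mapsto f'$ is a bijection on $\operatorname{Hom}$-sets in $\dcat R$, and I want to check it behaves well with respect to localization and to factorizations.

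First I would handle the fiberwise-zero equivalence. For a prime $\fp$ with residue field $k=k(\fp)$, applying $k\otimes_{R}(-)$ to the defining composite for $f'$ and using that $F^{*}$ is perfect (so $k\otimes_{R}F^{*}\cong(k\otimes_{R}F)^{*}$ over $k$, with evaluation commuting with base change) shows that $k\otimes_{R}f'$ is the $k$-linear composite $(k\otimes_{R}F)^{*}\otimes_{k}(k\otimes_{R}G)\to(k\otimes_{R}F)^{*}\otimes_{k}(k\otimes_{R}F)\to k$ built from $k\otimes_{R}f$ by the same adjunction recipe. Over the field $k$ a morphism in $\dcat k$ is zero iff it is a ghost, i.e. iff its homology vanishes; and the adjunction isomorphism $\Hom{\dcat k}{k\otimes_R G}{k\otimes_R F}\cong\Hom{\dcat k}{(k\otimes_R F)^{*}\otimes_{k}(k\otimes_R G)}{k}$ is a bijection, so $k\otimes_{R}f'=0$ in $\dcat k$ if and only if $k\otimes_{R}f=0$ in $\dcat k$. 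Quantifying over all $\fp\in\Spec R$ gives the stated equivalence.

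Next I would handle the factorization claim. Suppose $f$ factors in $\dcat R$ as $G\to X\to F$ with $\hh X$ being $I$-torsion. Apply the exact functor $F^{*}\otimes_{R}(-)$ and postcompose with $e$: this exhibits $f'$ as the composite $F^{*}\otimes_{R}G\to F^{*}\otimes_{R}X\xrightarrow{\,e\circ(F^{*}\otimes F)\,} R$. So it suffices to note that $F^{*}\otimes_{R}X$ is $I$-torsion, which is exactly \eqref{eq:I-torsion-tensor} (with the roles of the two factors swapped, legitimate since $\otimes^{\operatorname{L}}_{R}$ is symmetric), once one observes that $\hh X$ being $I$-torsion means $X$ is $I$-torsion in the sense of \ref{ssec:torsion-complexes}. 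One caveat: the lemma as stated asks only for the forward implication "if $f$ factors $\dots$, then so does $f'$", so I do not need a converse here; but the same argument with $F^{**}\cong F$ and the inverse adjunction would give it if wanted.

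The main obstacle I anticipate is bookkeeping rather than conceptual: making the adjunction isomorphism $\Hom{\dcat R}{G}{F}\cong\Hom{\dcat R}{F^{*}\otimes_{R}G}{R}$ genuinely natural and compatible with base change $R\to k$, so that "$f\mapsto f'$" really is a single natural construction that one can chase through tensoring with $X$ and with $k$. Since $G$ and $F$ are perfect (or at least $G$ is finite free bounded below and $F$ is perfect), both sides are represented at the chain level by $\Hom RGF$ and $\Hom R{F^{*}\otimes_{R}G}{R}$, the isomorphism between which is induced by the evaluation/biduality maps that the paper has already declared are used without comment in \ref{ssec:complexes} and thereafter; so the required naturality is routine diagram-chasing with these canonical maps, and the hypothesis $G_{i}=0$ for $i\ll0$ is precisely what guarantees the derived functors are computed by the naive complexes throughout.
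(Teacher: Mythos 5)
Your argument is correct and follows essentially the same route as the paper's own proof: both parts reduce to the observations that $F^{*}\otimes_{R}X$ inherits $I$-torsion and that, after base change to a residue field $k$, the map $f\mapsto f'$ is the tensor--hom adjunction bijection $\Hom{\dcat k}{\ov G}{\ov F}\cong\Hom{\dcat k}{(\ov F)^{\vee}\otimes_{k}\ov G}{k}$, so one side vanishes precisely when the other does. The paper phrases the last step slightly less explicitly (``easy to verify,'' with a separate remark when $\ov F=0$), but the content is the same, and your appeal to the bijection in fact handles the $\ov F=0$ case uniformly.
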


  \begin{proof} 
For the first assertion, note that if $f$ factors through an $I$-torsion complex $X$, then $F^{*}\otimes_{R}f$  factors through $F^{*}\otimes_{R}X$, and the latter is $I$-torsion.

For the second assertion, let $k$ be field and $R\to k$ be a homomorphism of rings. Let $\ov{(-)}$ and $(-)^{\vee}$ stand for 
$k\otimes_{R}(-)$ and $\Hom k-k$, respectively.  The goal is to prove that $\ov f=0$ is equivalent to $\ov{f'}=0$.

Since $F$ is perfect, there are canonical isomorphisms
\[
F^{*}\otimes k \xra{\ \cong\ } \Hom RFk \cong \Hom k{k\otimes_{R}F}k={(\ov F)}^{\vee}\,.
\]
Given this, it follows that $\ov{f'}$ can be realized as the composition of morphisms
\[
{(\ov F)}^{\vee} \otimes_{k}\ov G  \xra{\ {(\ov F)}^{\vee} \otimes_{k}\ov f   } {(\ov F)}^{\vee} \otimes_{k} \ov F \xra{\ \ov{e}\ } k \,.
\]
If $\ov F$ is zero, then $\ov f=0$ and $\ov{f'}=0$ hold. When $\ov F$ is nonzero,  it is easy to verify that $\ov f\ne 0$ is equivalent to $\ov{f'}\ne 0$, as desired.
  \end{proof} 

\begin{proof}[Proof of Theorem~\emph{\ref{thm:fiberwisezero}}]
Given morphisms of $R$-complexes $G\to X\to F$ such that $F$ and $G$ are perfect and $X$ is $I$-torsion for an ideal $I$ with 
\[
 \level{R}{\Hom RGF}\le \height I \,,
\]
we need to prove that $f$ is fiberwise zero. This implies the tensor nilpotence of $f$, as recalled in \ref{ssec:tensor-nilpotent}.

By Lemma~\ref{lem:fiberwise}, the morphism $f'\colon F^{*}\otimes_{R} G\to R$  factors through an $I$-torsion complex, and if $f'$ is fiberwise zero, so if $f$. The isomorphisms of $R$-complexes
\[
(F^{*}\otimes_{R}G)^{*} \cong G^{*} \otimes_{R} F \cong \Hom RGF
\]
and Lemma~\ref{lem:levels} yield $\level R{(F^{*}\otimes_{R}G)}=\level R{\Hom RGF}$. Thus, replacing $f$ by $f'$, it suffices to prove that if $f\colon G\to R$ is a morphism that factors through an $I$-torsion complex and satisfies $\level RG\le \height I$, then $f$ is fiberwise zero.

Fix $\fp$ in $\Spec R$. When $\fp\not\supseteq I$ we have $X_{\fp}=0$, by \ref{ssec:torsion-complexes}(2). For $\fp\supseteq I$ we have 
\[
\level{R_{\fp}}{G_{\fp}}\le  \level RG \le \height I \le \height I_{\fp}\,,
\]
where the first inequality follows directly from the definitions; see \cite[Proposition 3.7]{ABIM}.   It is easy to verify that $X_{\fp}$ is $I_{\fp}$-torsion.  Thus,  localizing at $\fp$, we may further  assume  $(R,\fm,k)$ is a local ring, and we have to prove that $\hh{k\otimes_{R}f}=0$ holds.  

Let $C$ be a big Cohen-Macaulay $R$-module.  It satisfies $\fm C\ne C$, so the canonical map $\pi\colon R\to k$ factors as $R\xra{\gamma}C\xra{\varepsilon}k$.  The composition  $G\xra{f}R\xra{\gamma}C$ is zero in $\dcat R$, by Proposition~\ref{prp:BCM}.  We get $\pi f=\varepsilon\gamma f=0$, whence $\hh{k\otimes_{R}\pi}\hh{k\otimes_{R}f}=0$. Since $\hh{k\otimes_{R}\pi}$ is bijective, this implies $\hh{k\otimes_{R}f}=0$, as desired.
  \end{proof}

The following consequence of Theorem~\ref{thm:fiberwisezero} is often helpful.
 
\begin{corollary}
\label{cor:overlap}
Let $(R,\fm,k)$ be a local ring, $F$ a perfect $R$-complex, and $G$ an $R$-complex 
of finitely  generated free modules.

If a morphism of $R$-complexes $f\colon G\to F$ satisfies the conditions 
\begin{enumerate}[\quad\rm(1)]
\item
$f$ factors through some $\fm $-torsion complex, 
\item
$\sup \nat F - \inf \nat G \le \dim R-1$, and
\end{enumerate}
then $\hh{k \otimes_{R}f}=0$.
\end{corollary}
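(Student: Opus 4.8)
The plan is to deduce Corollary~\ref{cor:overlap} from Theorem~\ref{thm:fiberwisezero} by producing an ideal $I$ whose height matches the numerical hypothesis. Since $(R,\fm,k)$ is local, the natural candidate is $I=\fm$, for which the first hypothesis of the theorem is exactly condition (1): $f$ factors through some $\fm$-torsion complex. What remains is to check that condition (2) of the theorem holds, namely $\level R{\Hom RGF}\le\height\fm$. We have $\height\fm=\dim R$ (the maximal ideal has height equal to the Krull dimension of the local ring), so the target inequality is $\level R{\Hom RGF}\le\dim R$.

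The key step is therefore to bound $\level R{\Hom RGF}$ using the span hypothesis (2) of the corollary, $\sup\nat F-\inf\nat G\le\dim R-1$. First I would observe that $\Hom RGF\cong G^{*}\otimes_{R}F$, and that since $F$ is perfect and $G$ is a bounded-below complex of finitely generated frees, this complex is bounded; its nonzero degrees are controlled by the supports of $G^{*}$ and $F$. Concretely, $(G^{*})_{i}=\Hom R{G_{-i}}R$, so $\sup\nat{G^{*}}=-\inf\nat G$, while $\inf\nat{G^{*}}=-\sup\nat G$. Hence $G^{*}\otimes_{R}F$ is concentrated in degrees between $\inf\nat{G^{*}}+\inf\nat F$ and $\sup\nat{G^{*}}+\sup\nat F=-\inf\nat G+\sup\nat F$, so its span is at most $(\sup\nat F-\inf\nat G)-(\inf\nat F-\sup\nat G)+1$. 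That is bigger than what we want in general, so a naive span bound is not enough; instead I would use \eqref{eq:level-span} more carefully, or better, truncate. The cleaner route: replace $G$ by a quasi-isomorphic finite free complex with $\inf\nat G$ unchanged (or simply note that $\level R{\Hom RGF}$ depends only on the quasi-isomorphism class), and observe $\Hom RGF$ is quasi-isomorphic to a perfect complex whose span — after suitable truncation respecting \eqref{eq:level-span} — is at most $\sup\nat F-\inf\nat G+1\le\dim R$. This uses that only the homological degrees carrying homology matter, and that $\level R{-}\le\cspan{-}$ for any perfect representative; the construction of the filtration $\filt nP=P_{<n+a}$ in \ref{ssec:levels} shows the bound is governed by where the complex ``starts and stops'' relative to a chosen representative, and the contribution of $F$ at the top and of $G$ at the bottom are exactly what the hypothesis controls.

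Once we have $\level R{\Hom RGF}\le\dim R=\height\fm$, Theorem~\ref{thm:fiberwisezero} applies with $I=\fm$ and yields that $f$ is fiberwise zero; in particular $k(\fm)\lotimes Rf=k\lotimes Rf=0$ in $\dcat k$, which is precisely $\hh{k\otimes_{R}f}=0$ since $G$ being a complex of frees means $k\otimes_{R}G$ computes $k\lotimes RG$ and the induced map on homology is $\hh{k\otimes_{R}f}$.

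I expect the main obstacle to be the span-to-level bookkeeping in the middle paragraph: ensuring that $\level R{\Hom RGF}$ is bounded by $\sup\nat F-\inf\nat G+1$ rather than by the larger raw span of $G^{*}\otimes_{R}F$. The resolution is to pick the perfect representative of $\Hom RGF$ judiciously — concretely, one may assume $F$ itself is perfect with $F_{i}=0$ for $i>\sup\nat F$ and that $G$, being free and bounded below, has $G_{i}=0$ for $i<\inf\nat G$, and then apply the filtration $\filt nP:=P_{<n+a}$ of \ref{ssec:levels} to $P:=G^{*}\otimes_{R}F$ with $a$ chosen so that the first $\dim R+1$ steps already exhaust $P$ in the degrees where it is nonzero. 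Alternatively, one invokes Lemma~\ref{lem:levels}(2) together with the regular-ring-style bound, but the direct span argument is shorter. Everything else is a routine translation of hypotheses into the language of Theorem~\ref{thm:fiberwisezero}.
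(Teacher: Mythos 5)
Your overall strategy---apply Theorem~\ref{thm:fiberwisezero} with $I=\fm$ after establishing $\level{R}{\Hom RGF}\le\height\fm=\dim R$---is the right one, and the reduction $\Hom RGF\cong G^*\otimes_R F$ with the evaluation map matches the paper's use of Lemma~\ref{lem:fiberwise}. But the middle of your argument has a genuine gap, which you half-recognize and then paper over rather than resolve.

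The problem is the bottom of $G^*\otimes_R F$, not the top. Hypothesis (2) controls $\sup\nat{(G^*\otimes_R F)}=\sup\nat F-\inf\nat G\le\dim R-1$, but $\inf\nat{(G^*\otimes_R F)}=\inf\nat F-\sup\nat G$, and $\sup\nat G$ is completely unconstrained---indeed $G$ need not even be bounded above, in which case $G^*\otimes_RF$ is unbounded and cannot have finite level without some further argument. Your proposed fixes do not address this: replacing $G$ by a quasi-isomorphic complex does not lower the level (level is a quasi-isomorphism invariant, so this is circular), and the filtration $\filt nP:=P_{<n+a}$ from \ref{ssec:levels} has exactly one step per nonzero degree, so it cannot ``exhaust $P$'' in $\dim R+1$ steps when $\cspan P>\dim R+1$. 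So the claimed bound $\level R{\Hom RGF}\le\dim R$ is never actually established, and I do not believe it holds in general.

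What the paper does instead, and what your proposal is missing, is a truncation of $G$ \emph{after} the reduction to $F=R$. Once $f$ is replaced by $f'\colon G':=F^*\otimes_R G\to R$, the target is concentrated in degree $0$, so the composite $G'_{\le 0}\hookrightarrow G'\xra{f'}R$ still factors through an $\fm$-torsion complex, and vanishing of $\HH 0(k\otimes_R G'_{\le 0})\to k$ implies vanishing of $\hh{k\otimes_R f'}$. The truncated complex $G'_{\le 0}$ sits in degrees $[-\dim R+1,0]$, hence is perfect of span at most $\dim R$, so $\level R{G'_{\le 0}}\le\dim R=\height\fm$, and Theorem~\ref{thm:fiberwisezero} now applies legitimately. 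This truncation is the key step; the reduction to a map into $R$ is not merely a convenience but what makes the cut-off harmless. Without it, neither the perfectness of the source nor the level bound required by the theorem is available.
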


\begin{proof}
An $\fm$-torsion complex $X$ satisfies $k(\fp)\lotimes RX=0$ for any $\fp$ in $\Spec R\setminus \{\fm\}$. Thus a morphism, $g$, of $R$-complexes that factors through $X$ is fiberwise zero if and only if $k\lotimes Rg=0$. This remark will be used in what follows. 

Condition (2) implies $G_{n}=0$ for $n\ll 0$. Let $f'$ denote the composition
\[
F^{*}\otimes_{R}G \xra{\ F^{*}\otimes_{R}f\ } F^{*}\otimes_{R}F \xra{\ e\ } R\,,
\]
where $e$ is the evaluation map.  Since $\inf \nat{(F^{*}\otimes_{R} G)} = - \sup \nat F + \inf \nat G$, Lemma~\ref{lem:fiberwise} shows that it suffices to prove the corollary 
for morphisms $f\colon G\to R$.  

As $f$ factors through some $\fm$-torsion complex, so does the composite morphism
\[
G_{\les 0} \subseteq G \xra{\ f\ }R
\]
It is easy to check that if the induced map 
$\hh{k\otimes_{R}G_{\les 0}} \to \hh{k\otimes_{R}R} = k$ is zero, then so is $\hh{k\otimes_{R}f}$.  
Thus we may assume $G_{n} =0$ for $n\not\in [-d+1,0]$, where $d=\dim R$. This implies $\level {R} G \le d$, 
so Theorem~\ref{thm:fiberwisezero} yields the desideratum.
\end{proof}

For some applications the next statement, with weaker hypothesis but also weaker conclusion, suffices.  The example in Remark~\ref{rem:optimal} shows that the result cannot be strengthened to conclude that $f$ is fiberwise zero.

\begin{theorem}
\label{thm:ghost}
Let $R$ be a local ring and $f\colon G\to F$ a morphism of $R$-complexes. 

If there exists an ideal $I$ of $R$ such that 
\begin{enumerate}[\quad\rm(1)]
\item
$f$ factors through an $I$-torsion complex, and
\item
$\level RF\le \height I$,
\end{enumerate}
then $\hh{C\lotimes {R} f}=0$ for every big Cohen-Macaulay module $C$.
\end{theorem}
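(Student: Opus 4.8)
The plan is to deduce the vanishing from the Ghost Lemma (Lemma~\ref{lem:ghost}), exploiting that for a big Cohen--Macaulay module $C$ the canonical torsion morphism $t\colon\lch_{I}C\to C$ is a composition of $\height I$ ghosts by Proposition~\ref{prp:BCM}(1). Set $c:=\height I$, so that hypothesis~(2) reads $\level RF\le c$. Using hypothesis~(1), factor $f$ in $\dcat R$ as $G\xra{}Z\xra{}F$ with $Z$ an $I$-torsion complex. Applying the functor $C\lotimes R(-)$ gives a factorization of $C\lotimes Rf$ through $C\lotimes RZ$, and $C\lotimes RZ$ is again $I$-torsion by~\eqref{eq:I-torsion-tensor}.

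By the universal property of the torsion functor recalled in~\ref{ssec:torsion-complexes}, the morphism $C\lotimes RZ\to C\lotimes RF$, and hence $C\lotimes Rf$ itself, factors through the canonical morphism $t'\colon\lch_{I}(C\lotimes RF)\to C\lotimes RF$. Since a composite of morphisms one of which is a ghost is a ghost (see~\ref{ssec:ghosts}), and $\hh{C\lotimes Rf}=0$ is precisely the assertion that $C\lotimes Rf$ is a ghost, it now suffices to show that $t'$ is a ghost. To this end, Proposition~\ref{prp:BCM}(1) presents $t\colon\lch_{I}C\to C$ as a composition of $c$ ghosts; as $c\ge\level RF$, Lemma~\ref{lem:ghost} applied to $F$ shows that $F\lotimes Rt\colon F\lotimes R\lch_{I}C\to F\lotimes RC$ is a ghost. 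Finally, the natural isomorphism~\eqref{eq:I-torsion-monoidal} identifies $\lch_{I}(C\lotimes RF)$ with $F\lotimes R\lch_{I}C$ compatibly with the canonical morphisms $t$, so $t'$ is identified with $F\lotimes Rt$; hence $t'$ is a ghost and the proof is complete.

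I expect the only point requiring care to be this last compatibility of~\eqref{eq:I-torsion-monoidal} with the morphisms $t$ — that the isomorphism $\lch_{I}(C\lotimes RF)\cong F\lotimes R\lch_{I}C$ carries $t'$ to $F\lotimes Rt$. This is part of the naturality in Lipman's construction \cite[3.3.1]{Li}, and it is transparent if $\lch_{I}$ is computed by a \v Cech complex, since the latter is a bounded complex of flat modules and $t$ is then induced by its augmentation. Everything else is a formal assembly of the Ghost Lemma and Proposition~\ref{prp:BCM}; observe that locality of $R$ enters the argument only through the existence of $C$ and through Proposition~\ref{prp:BCM}, and that no finiteness or boundedness hypothesis on $G$ is used beyond the factorization in~(1).
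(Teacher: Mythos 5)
Your proof is correct and follows essentially the same route as the paper's: factor $C\lotimes_R^{\mathrm{L}}f$ through the torsion morphism for $C\lotimes_R^{\mathrm{L}}F$, identify that morphism via~\eqref{eq:I-torsion-monoidal} with $F\lotimes_R^{\mathrm{L}}t$, and then use Proposition~\ref{prp:BCM}(1) together with Lemma~\ref{lem:ghost}. The only difference is cosmetic: you make explicit the compatibility of the monoidal isomorphism with the canonical maps $t$, which the paper invokes in one stroke.
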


\begin{proof}
Set $c:=\height I$ and let $t\colon \lch_{I}C\to C$ be the canonical morphism. It follows from \eqref{eq:I-torsion-tensor} that $C\lotimes Rf$ also factors through an $I$-torsion $R$-complex. Then the quasi-isomorphism \eqref{eq:I-torsion-monoidal} and the universal property of the derived $I$-torsion functor, see \ref{ssec:torsion-complexes}, implies that $C\lotimes Rf$ factors as a composition of the morphisms:
\[
C\lotimes RG \lra (\lch_{I}C) \lotimes R F \xra{\ t\lotimes RF\ }  C\lotimes R F
\]
By Proposition~\ref{prp:BCM}(1) the morphism $t$  is a composition of $c$ ghosts.   Thus condition (2) and Lemma~\ref{lem:ghost} imply $t\lotimes RF$ is a ghost, and hence so is $C\lotimes Rf$.
\end{proof}

\section{Applications to local algebra}
\label{sec:applications}
In this section we record applications the Tensor Nilpotence Theorem to local algebra. To that end it is expedient 
to reformulate it as the Morphic Intersection Theorem from the Introduction, restated below.

\begin{theorem}
\label{thm:MIT}
Let $R$ be a commutative noetherian ring and $f\colon G\to F$ a morphism of perfect $R$-complexes.

If $f$ is not fiberwise zero and factors through a complex with $I$-torsion homology for an ideal $I$ of $R$, then
there are inequalities:
\[
\cspan F + \cspan G -1 \ge  \level{R}{\Hom RGF} \ge  \height I + 1\,. 
\]
  \end{theorem}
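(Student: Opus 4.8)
The plan is to deduce Theorem~\ref{thm:MIT} directly from Theorem~\ref{thm:fiberwisezero} by contraposition, together with the elementary bound $\level RF\le\cspan F$ from \eqref{eq:level-span} and the subadditivity of level in Lemma~\ref{lem:levels}(2). So suppose $f\colon G\to F$ is a morphism of perfect $R$-complexes that is not fiberwise zero and factors through a complex with $I$-torsion homology. By the equivalence recorded in~\ref{ssec:fiberwise zero} (Hopkins, Neeman), $f$ is not tensor nilpotent either. Now apply the contrapositive of Theorem~\ref{thm:fiberwisezero}: since $f$ satisfies hypothesis~(1) of that theorem but fails the conclusion, hypothesis~(2) must fail, i.e.\ $\level R{\Hom RGF}>\height I$, which is the inequality $\level R{\Hom RGF}\ge\height I+1$.

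For the left-hand inequality, I would combine Lemma~\ref{lem:levels}(2) with \eqref{eq:level-span}. Since $F$ and $G$ are perfect, so is $\Hom RGF\cong G^*\otimes_R F$, and Lemma~\ref{lem:levels} gives
\[
\level R{\Hom RGF}\le \level R{G^*}+\level RF-1 = \level RG+\level RF-1,
\]
using part~(1) of that lemma for $\level R{G^*}=\level RG$. Finally \eqref{eq:level-span} yields $\level RG\le\cspan G$ and $\level RF\le\cspan F$ (taking $G$, $F$ themselves as the perfect complexes in the statement), so $\level R{\Hom RGF}\le\cspan F+\cspan G-1$. Stringing the two inequalities together gives exactly the displayed chain. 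One should note at this point that a non-fiberwise-zero $f$ forces $F\ne0$ and $G\ne0$, so the spans are genuine natural numbers and there is no degenerate case to worry about.

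There is essentially no hard step here: the content of the Morphic Intersection Theorem is entirely carried by Theorem~\ref{thm:fiberwisezero} (whose proof uses the big Cohen-Macaulay modules via Proposition~\ref{prp:BCM}) and by the Hopkins--Neeman equivalence of fiberwise-zero and tensor-nilpotent for perfect complexes. The only thing to be careful about is the logical direction --- Theorem~\ref{thm:fiberwisezero} says ``(1) and (2) $\Rightarrow$ fiberwise zero,'' and we are using it as ``not fiberwise zero and (1) $\Rightarrow$ not (2)'' --- and the observation that the right-hand bound $\level R{\Hom RGF}\le\cspan F+\cspan G-1$ makes no use of hypothesis~(1) at all; it holds for every morphism of perfect complexes. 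So in writing this up I would present it as a two-line derivation: first the level inequality from Lemma~\ref{lem:levels} and \eqref{eq:level-span}, then the height inequality from the contrapositive of Theorem~\ref{thm:fiberwisezero}.
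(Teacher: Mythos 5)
Your proof is correct and matches the paper's own argument: the left inequality follows from Lemma~\ref{lem:levels} together with \eqref{eq:level-span}, and the right inequality is the contrapositive of Theorem~\ref{thm:fiberwisezero}. The detour through tensor nilpotence via Hopkins--Neeman is unnecessary, since Theorem~\ref{thm:fiberwisezero} already concludes fiberwise vanishing directly, but this does not affect correctness.
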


\begin{proof}
The inequality on the left comes from Lemma~\ref{lem:levels} and \eqref{eq:level-span}. The one on the right is  
the contrapositive of Theorem~\ref{thm:fiberwisezero}.
\end{proof}

Here is one consequence.

\begin{theorem}
\label{thm:init}
Let $R$ be a local ring and $F$ a complex of finite free $R$-modules:
\[
F:=\quad 0\to F_{d}\to F_{d-1}\to \cdots \to F_{0}\to 0
\]
For each ideal $I$ such that $I\cdot \HH iF=0$ for $i\ge 1$ and $I\cdot z=0$ for some element $z$ in $\HH 0F \setminus \fm \HH 0F$, where $\fm$ is the maximal ideal of $R$, one has 
\[
d +1 \ge \cspan F \ge \level RF\ge \height I +1\,.
\]
\end{theorem}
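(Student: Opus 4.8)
The plan is to deduce Theorem~\ref{thm:init} from the Morphic Intersection Theorem (Theorem~\ref{thm:MIT}) by exhibiting an appropriate morphism out of $F$ that is not fiberwise zero and factors through an $I$-torsion complex. The three numerical relations $d+1\ge\cspan F\ge\level RF$ are immediate: $\cspan F\le d+1$ because $F$ is concentrated in degrees $0,\dots,d$, and $\cspan F\ge\level RF$ by \eqref{eq:level-span} applied to $F$ itself (which is already perfect). So the entire content is the inequality $\level RF\ge\height I+1$, and the claim is that this is exactly what Theorem~\ref{thm:MIT} gives once we produce the right $f$.

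First I would build the morphism. Let $H:=\HH 0F$ and note $z\in H\setminus\fm H$, so the composite $R\xrightarrow{z}H\twoheadleftarrow$ extends: more precisely, pick a surjection $H\twoheadrightarrow R/\fp$ or simply split off a free summand—since $z\notin\fm H$, there is an $R$-linear map $H\to R/\ann(z)$, but the cleanest route is to use the augmentation $F\to H\to C$ to a residue-type target. Concretely, set $G:=F$ and take $f\colon F\to F$ to be... no: the natural candidate is the morphism $f\colon F\to \shift^{0}H \to$ something whose fiber over $\fm$ is nonzero. I would instead take the target to be $F$ itself and let $f$ be multiplication structure—but the hypothesis "$I$-torsion homology" suggests the right object. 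The idea: because $I\cdot\HH iF=0$ for all $i\ge0$ (for $i\ge1$ by hypothesis, and $I z=0$ gives torsion on the relevant class in degree $0$), the homology of $F$ is "almost" $I$-torsion; so there is a morphism $w\colon F\to F$, or a self-map realizing the class $z$, that factors through an $I$-torsion complex. Let me reorganize: consider the canonical morphism $\epsilon\colon F\to \HH 0F=H$, then the map $H\xrightarrow{z^{*}}$... The honest plan is: the class $z$ defines $H\to R/\ann z$, hence $F\to R/\ann z$; since $I\subseteq\ann z$ we get that $R/\ann z$, viewed as a complex in degree $0$, has $I$-torsion homology, so $f\colon F\to R/\ann z$ factors through an $I$-torsion complex (itself). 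Taking $G=F$ and this $f$ as the morphism $f\colon G\to F'$—but the target in Theorem~\ref{thm:MIT} must itself be perfect, which $R/\ann z$ need not be.

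So the correct construction is to keep everything inside perfect complexes and use $I$-torsion only for the intermediate factoring complex. Take $G:=F$, take $F:=F$, and let $f\colon F\to F$ be the composite $F\xrightarrow{\epsilon}\shift^0 H$ followed by a lift of $z\colon \shift^0(R/\ann z)\dashrightarrow F$—this lift exists because $F$ is a free resolution-like complex and $R/\ann z$ is a module (concentrated in degree $0$, and $F_{\ge0}$ is free). Thus $f$ factors as $F\to R/\ann z\to F$ through the $I$-torsion complex $R/\ann z$ (it is $I$-torsion since $I\subseteq\ann z$). It remains to check $f$ is \emph{not} fiberwise zero: over $k=R/\fm$ the map $k\otimes f$ in degree $0$ sends a chosen basis vector hitting $z$ to a nonzero element, precisely because $z\notin\fm H$, so $\hh{k\otimes_R f}\ne0$. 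Then Theorem~\ref{thm:MIT} applies: $f\colon F\to F$ is not fiberwise zero and factors through an $I$-torsion complex, yielding $\level R{\Hom RFF}\ge\height I+1$; but $\level R{\Hom RFF}\le \cspan(\Hom RFF)\le \cspan F$ is false in general—I need $\level RF$ on the left, not $\level R{\Hom RFF}$.

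The fix—and this is the real subtlety I expect to be the main obstacle—is to not self-tensor but to use the one-sided version. Apply Theorem~\ref{thm:fiberwisezero}/\ref{thm:MIT} with $G$ a \emph{free} complex and $F$ the perfect one so that the relevant level is $\level R{\Hom RGF}$; choosing $G=\shift^0 R$ (a single free module in degree $0$) and $f\colon \shift^0 R\to F$ a lift of $z\colon R\to \HH 0F$ through $F$ (possible since $F_0$ is free and $F\to H$ is surjective on $\HH0$), we get that $\Hom RGF\simeq F$, so $\level R{\Hom RGF}=\level RF$. This $f$ factors through the $I$-torsion module $R/\ann z$ in degree $0$ (as $Iz=0$ means $z$ comes from $R/\ann z$, or more simply $f$ factors through $\tau_{\ge0}$ of the mapping cone on the torsion); and $f$ is not fiberwise zero exactly because $z\notin\fm\HH0F$, so $\hh{k\otimes f}\ne0$. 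Then Theorem~\ref{thm:MIT} gives $\level RF=\level R{\Hom R{\shift^0R}F}\ge\height I+1$, completing the proof. The hard part is verifying cleanly that $f$ factors through an $I$-torsion complex: one must use that $z$ lies in the image of $\HH0F\to\HH0F$ killed by $I$, i.e. produce a genuine morphism of complexes $\shift^0 R\to F$ through an $I$-torsion complex $X$ with $\HH0X$ supporting $z$; I would do this by letting $X$ be a free resolution truncation of $R/\ann z$ followed by the lift into $F$, checking that $\hh X$ is $I$-torsion (which holds since $I\cdot R/\ann z=0$) and that the composite in homology hits $z$.
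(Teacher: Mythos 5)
The overall architecture of your argument is the right one and matches the paper's: take $G=R$ concentrated in degree $0$, pick a cycle $\wt z\in F_0$ representing $z$, let $f\colon R\to F$ be $r\mapsto r\wt z$, observe that $\Hom RRF\cong F$ so the relevant invariant in Theorem~\ref{thm:MIT} is $\level RF$, and note $f$ is not fiberwise zero because $z\notin\fm\HH 0F$.  The gap is exactly where you flag it: producing the $I$-torsion complex through which $f$ factors.

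Your candidate $X$ is a free resolution of $R/\ann z$, and you want a chain map (or $\dcat R$-morphism) $X\to F$ with $R\to X\to F$ homotopic to $f$.  That lift does not exist in general, because $F$ is \emph{not} assumed to be a resolution of $\HH 0F$; the higher homology $\HH iF$ for $i\ge 1$ is allowed to be nonzero, and the Comparison Theorem does not apply.  Indeed, in $\dcat R$ the factorization of $f$ through $X\simeq R/\ann z$ exists iff the composite $\ann z\to R\xra{f}F$ vanishes in $\dcat R$, and while this composite kills $\HH 0$, there are nonzero obstructions in higher degrees.  A concrete failure: $R=k[[x,y,u]]$ and $F\colon 0\to R^2\xra{(x\ \ y)}R\to 0$, with $z=\ov 1\in\HH 0F=R/(x,y)$, $\ann z=(x,y)$ of height~$2$.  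Here every chain map $X\to F$ from the Koszul resolution $X$ of $R/(x,y)$ must kill $X_2$, and one checks directly that no such map has $1\mapsto 1$ in degree $0$.  And it had better fail: $\level RF=2$ (span is $2$), so the conclusion $\level RF\ge\height\ann z+1=3$ your construction would furnish is false.  The reason is that you never use the hypothesis $I\cdot\HH iF=0$ for $i\ge 1$ — a red flag, since that condition is what limits the ideals $I$ for which the bound can hold (in the example, any admissible $I$ must kill $\HH 1F\cong R$, so $I=0$).

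The paper sidesteps all of this by building $X$ as a \emph{subcomplex} of $F$ rather than as an abstract resolution: take $X_i=F_i$ for $i\ge 1$ and $X_0=R\wt z+\dd(F_1)\subseteq F_0$.  Then $f$ factors through the inclusion $X\subseteq F$ for free, $\HH iX=\HH iF$ for $i\ge 1$, and $\HH 0X=Rz$; the hypothesis $I\cdot\HH iF=0$ for $i\ge 1$, together with $Iz=0$, is exactly what makes $\hh X$ $I$-torsion.  Replacing your resolution of $R/\ann z$ by this subcomplex repairs the proof and simultaneously explains why the hypothesis on higher homology is there.
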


\begin{proof}
Indeed, the first two inequalities are clear from definitions. As to third one, pick $\wt z\in F_{0}$ representing $z$ in  $\HH  0F$ and consider the morphism of complexes $f\colon R\to F$ given by $r\mapsto r\wt z$.  Since $z$ is not in $\fm \HH 0F$, one has 
\[
\HH 0{k\otimes_{R}f} = k\otimes_{R}\HH 0f \ne 0
\]
for $k=R/\fm$.  In particular, $k\otimes_{R}f$ is nonzero. On the other hand, $f$ factors through the 
inclusion $X\subseteq F$, where $X$ is the subcomplex defined by 
\[
X_{i} = \begin{cases}
F_{i} & i\ge 1 \\
R\wt z + d(F_{1}) & i=0
\end{cases}
\]
By construction, we have $\HH iX=\HH iF$ for $i\ge 1$ and $I\HH 0X=0$, so $\hh X$ is $I$-torsion. 
The desired inequality now follows from Theorem~\ref{thm:MIT} applied to $f$.
\end{proof}

The preceding result is a stronger form of the Improved New Intersection Theorem\footnote{This and the other statements in this section were conjectures prior to the appearance of \cite{An}.} of Evans and Griffith~\cite{EG}; see also~\cite[\S2]{Hoc4}. First,  the latter is in terms of spans of perfect complexes whereas the one above is in terms of levels with respect to $R$; second,  the hypothesis on the homology of $F$ is weaker. Theorem~\ref{thm:init}  also subsumes prior extensions of the New Intersection Theorem to statements involving levels, namely ~\cite[Theorem~5.1]{ABIM}, where it was assumed that $I\cdot \HH 0F=0$ holds, and \cite[Theorem~3.1]{AGS} which requires $\HH iF$ to have finite length for $i\ge 1$.

In the influential paper \cite{Hoc3}, Hochster identified certain \emph{canonical elements} in the local cohomology of local rings, conjectured that they are never zero, and proved that statement in the equal characteristic case.   He also gave several reformulations that do not involve local cohomology. Detailed discussions of the relations between these statements and the histories of their proofs are presented in \cite{Ro1} and \cite{Hoc5}. 

Some of those statements concern properties of morphisms from the Koszul complex on some system of parameters to resolutions of various $R$-modules.  This makes them particularly amenable to approaches from the Morphic Intersection Theorem.  In the rest of this section we uncover direct paths to various forms of the Canonical Element Theorem and related results.

We first prove a version of \cite[2.3]{Hoc3}.  The conclusion there is that $f_d$ is not zero, but the remarks in 
\cite[2.2(6)]{Hoc3} show that it is equivalent to the following statement. 

  \begin{theorem}
\label{thm:cec-plus}
Let $(R,\fm,k)$ be a local ring, $\bsx$ a system of parameters for $R$, and $K$ the Koszul complex on $\bsx$, 
and $F$ a complex of free $R$-modules.

If $f\colon K \to F$ is a morphism of $R$-complexes with $\HH 0{k\otimes_{R}f}\ne 0$, then one has
  \[
\HH d{S\otimes_{R}f}\ne0 \quad\text{for}\quad S=R/(\bsx) \quad\text{and}\quad d=\dim R\,.
  \]
  \end{theorem}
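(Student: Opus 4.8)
I would prove this by contradiction, deducing $\HH 0{k\otimes_R f}=0$ from the assumption $\HH d{S\otimes_R f}=0$. The driving tool is Corollary~\ref{cor:overlap}; the role of the hypothesis $\HH d{S\otimes_R f}=0$ is to let one push the support of $f$ down into degrees $[0,d-1]$, which is exactly what makes the overlap inequality $\sup\nat P-\inf\nat K\les\dim R-1$ available. Two harmless normalizations come first: the case $d=0$ is immediate, so assume $d\ges 1$; and replacing $F$ by the quotient complex $F/F_{<0}$ (still a complex of free modules) we may assume $F_i=0$ for $i<0$, since the surjection $F\to F/F_{<0}$ is an isomorphism in degrees $\ges 0$ and hence induces an isomorphism on $\HH d$ and an injection on $\HH 0$ after applying either $k\otimes_R-$ or $S\otimes_R-$, so that neither the hypothesis nor the intended conclusion changes.

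The heart of the argument is a homotopy. Since $S\otimes_R K$ is the Koszul complex on the zero sequence it has zero differential, so $\HH d{S\otimes_R K}=(S\otimes_R K)_d\cong S$, generated by the image of a free generator $e$ of $K_d\cong R$. Thus $\HH d{S\otimes_R f}=0$ means precisely that $1\otimes f_d(e)$ is a boundary in $S\otimes_R F$; lifting back to $F$ produces $\wt v\in F_{d+1}$ and $u_1,\dots,u_d\in F_d$ with $f_d(e)-\dd^F(\wt v)=\sum_{i}x_iu_i$. As $K$ is concentrated in degrees $[0,d]$, with $K_d\cong R$, $K_{d-1}\cong R^d$, and $\dd^K\colon K_d\to K_{d-1}$ represented by $(\pm x_1,\dots,\pm x_d)$, one can then exhibit a chain homotopy $s$ whose only nonzero components are $s_d\colon K_d\to F_{d+1}$ with $s_d(e)=-\wt v$ and $s_{d-1}\colon K_{d-1}\to F_d$ carrying the standard basis to the $u_i$ with suitable signs; a direct check gives $(f+\dd^F s+s\dd^K)_d=0$. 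Writing $f'':=f+\dd^F s+s\dd^K$, this is a chain map homotopic to $f$ whose components vanish outside degrees $[0,d-1]$.

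Consequently $f''(K)$ lies in the subcomplex $F_{[0,d-1]}$ (genuinely a subcomplex, because $F_{<0}=0$), and it is finitely generated in each degree; building a finite-free direct-summand subcomplex of $F_{[0,d-1]}$ downward from degree $d-1$ yields a perfect complex $P$ concentrated in degrees $[0,d-1]$ together with a factorization $K\xra{\,h\,}P\hookrightarrow F$ of $f''$. Since $f''$ is homotopic to $f$, the nonzero map $\HH 0{k\otimes_R f}=\HH 0{k\otimes_R f''}$ factors through $\HH 0{k\otimes_R h}$, so $\HH 0{k\otimes_R h}\ne 0$. On the other hand $h$ factors through the $\fm$-torsion complex $K$ (its homology is annihilated by the $\fm$-primary ideal $(\bsx)$), $P$ is perfect, $K$ is a complex of finitely generated free modules, and $\sup\nat P-\inf\nat K\les(d-1)-0=\dim R-1$; hence Corollary~\ref{cor:overlap} forces $\hh{k\otimes_R h}=0$. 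This contradicts $\HH 0{k\otimes_R h}\ne 0$, so $\HH d{S\otimes_R f}\ne 0$.

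The step I expect to be the main obstacle is the homotopy construction: one has to recognize that $\HH d{S\otimes_R f}=0$ is exactly the assertion that $f_d$ becomes a boundary modulo $(\bsx)$, and then build the two non-trivial homotopy components against the Koszul differential so as to cancel $f_d$ — the sign bookkeeping is the only fiddly part, and it is contained because only the degrees $d$ and $d-1$ of $K$ intervene. The remaining technical point, passing from $F$ to the perfect subcomplex $P$, is routine once $F$ has been arranged to be bounded below.
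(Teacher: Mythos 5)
Your argument is correct and follows the same route as the paper's proof: argue by contradiction, read off from $\HH d{S\otimes_R f}=0$ that the image of the top Koszul generator lies in $(\bsx)F_d+\dd^F(F_{d+1})$, build a homotopy supported in degrees $d-1,d$ to produce a chain map homotopic to $f$ with vanishing degree-$d$ component, factor through a truncation landing in degrees at most $d-1$, and invoke Corollary~\ref{cor:overlap} to force $\HH 0{k\otimes_R f}=0$, a contradiction. The homotopy you describe is, up to sign, exactly the paper's $h$, and the sign issue you flag is genuinely the only fiddly point there.

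One place where you are more careful than the paper and the difference is worth noting. The paper applies Corollary~\ref{cor:overlap} directly to the factored map $g'\colon K\to F_{<d}$, but that corollary requires its target to be a \emph{perfect} complex, and $F_{<d}$ need not be one here: the theorem only assumes $F$ is a complex of free modules, so $F_{<d}$ may be unbounded below or of infinite rank. Your two normalizations --- replacing $F$ by $F/F_{<0}$, and then cutting out a finite-free direct-summand subcomplex $P\subseteq F_{[0,d-1]}$ containing the image of the modified map, so that the corollary is applied to $K\to P$ --- close exactly that gap. Since $K$ is finite free, the image in each degree is finitely generated and sits inside a finite-rank free direct summand of the ambient free module, so $P$ can indeed be built degree by degree downward from $d-1$; and because $F$ has been truncated to non-negative degrees, $F_{[0,d-1]}$ really is a subcomplex, which makes $P\hookrightarrow F$ a chain map. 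This is a refinement of the paper's argument rather than a different proof, but it is a legitimate and useful one.
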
  

\begin{proof}
Recall from \ref{ssec:koszul} that $\dd(K)$ lies in $(\bsx)K$, that $K_1$ has a basis $\wt x_1,\dots,\wt x_d$ and 
$\nat K$ is the exterior algebra on $K_1$.  Thus $K_d$ is a free $R$-module with basis $x=\wt x_1\cdots \wt x_d$ 
and $\HH d{S\otimes_RK}=S(1\otimes x)$, so we need to prove $f(K_{d})\not\subseteq (\bsx)F_{d}+\dd(F_{d+1})$.  

Arguing by contradiction, we suppose the contrary.  This means 
  \[
f(x) = x_1 y_1 +\cdots + x_dy_d +\dd^F(y)
  \]
with $y_1,\dots,y_d\in F_d$ and $y\in F_{d+1}$.  For $i=1,\dots,d$ set $x^{*}_i:=\wt x_1\cdots\wt x_{i-1}\wt x_{i+1}\cdots\wt x_d$; 
thus $\{ x^{*}_1,\dots,x^{*}_d\}$ is a basis of the $R$-module $K_{d-1}$.  Define $R$-linear maps 
  \begin{alignat*}{3}
h_{d-1}\colon K_{d-1}&\to F_{d} &\quad&\text{by}\quad &h_{d-1}(x^{*}_i)&=(-1)^{i-1}y_i \quad\text{for}\quad i=1,\dots,d\,.
  \\
h_{d}\colon K_{d}&\to F_{d+1} &\quad&\text{by}\quad &h_{d}(x)&=y\,.
  \end{alignat*}
Extend them to a degree one map $h\colon K\to F$ with $h_{i}=0$ for $i\ne d-1,d$. The map
\[
g:=f - \partial^Fh-h\partial^K\colon K\to F
\]
is easily seen to be a morphism of complexes that is homotopic to $f$ and satisfies $g_{d}=0$. This last 
condition implies that $g$ factors as a composition of morphisms
\[
K \xra{\ g'\ } F_{<d}\subseteq F\,.
\]
The complex $K$ is $\fm$-torsion; see \ref{ssec:koszul}.  Thus Corollary~\ref{cor:overlap}, applied to $g'$, yields 
$\hh{k\otimes_{R}g'}=0$. This gives the second equality below:
\[
\HH 0{k\otimes_{R}f}=\HH 0{k\otimes_{R}g} = \HH 0{k\otimes_{R}g'}=0\,.
\]
The first one holds because $f$ and $g$ are homotopic, and the last one because $g_{0}=g'_{0}$. 

The result of the last computation contradicts the hypotheses on $\HH 0{k\otimes_{R}f}$.
\end{proof}

A first specialization is the Canonical Element Theorem.

\begin{corollary}
\label{cor:cec}
Let $I$ be an ideal in $R$ containing a system of parameters $x_{1},\dots,x_{d}$. With $K$ the Koszul complex on $\bsx$ and $F$ a free resolution of $R/I$,  any morphism $f\colon K\to F$ of $R$-complexes lifting the surjection $R/(\bsx)\to R/I$ has  $f_{d}(K) \ne 0$.\qed
\end{corollary}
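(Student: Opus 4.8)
The plan is to deduce this directly from Theorem~\ref{thm:cec-plus}, so the main task is to verify that its hypothesis is met, namely that $\HH 0{k\otimes_R f}\ne 0$, where $k=R/\fm$. Observe first that $f_d(K)\ne 0$ is the only thing to prove: once we know $\HH d{S\otimes_R f}\ne 0$ for $S=R/(\bsx)$, the free module $K_d$ maps nontrivially, since $\HH d{S\otimes_R K}=S\cdot(1\otimes x)$ is a nonzero cyclic $S$-module generated by the top basis element $x=\wt x_1\cdots\wt x_d$ of $K_d$, and a morphism killing $K_d$ would kill this class. So it suffices to check the hypothesis of Theorem~\ref{thm:cec-plus}.

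To see that $\HH 0{k\otimes_R f}\ne 0$, note that $f$ lifts the canonical surjection $R/(\bsx)\twoheadrightarrow R/I$; in degree zero this is the identification $\HH 0K = R/(\bsx)$, and $\HH 0F = R/I$ (as $F$ is a free resolution of $R/I$), with $\HH 0 f$ equal to the natural surjection $R/(\bsx)\to R/I$. Applying $k\otimes_R-$ and using right-exactness of tensor product, the induced map $\HH 0{k\otimes_R f}$ is the map $k\otimes_R R/(\bsx)\to k\otimes_R R/I$; but since $(\bsx)\subseteq I\subseteq\fm$, both sides are canonically isomorphic to $k$, and the map is the identity, hence nonzero. (Here one uses $\HH 0{k\otimes_R F}=k\otimes_R\HH 0F$ because $F_0$ surjects onto $\HH 0 F$ and the map $\fm\HH 0F$ vanishes mod $\fm$; more simply, $k\otimes_R R/I = R/\fm = k \ne 0$ because $I\subseteq\fm$.)

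With the hypothesis verified, Theorem~\ref{thm:cec-plus} applies verbatim and gives $\HH d{S\otimes_R f}\ne 0$, whence $f_d(K)\ne 0$ as explained above. I do not anticipate a genuine obstacle here: the only point requiring a moment's care is the bookkeeping that $\HH 0 f$ is indeed the surjection $R/(\bsx)\to R/I$ in the chosen identifications of $\HH 0K$ and $\HH 0 F$, and that this remains nonzero after reducing modulo $\fm$ — both of which are immediate once one records that $I\subseteq\fm$ forces $R/I\otimes_R k=k\ne 0$.
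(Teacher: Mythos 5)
Your proof is correct and is exactly the intended specialization: you verify the hypothesis $\HH 0{k\otimes_Rf}\ne 0$ of Theorem~\ref{thm:cec-plus} (using $\HH 0 f$ being the surjection $R/(\bsx)\to R/I$ and $I\subseteq\fm$), and then translate the conclusion $\HH d{S\otimes_Rf}\ne 0$ back to $f_d\ne 0$ via $\HH d{S\otimes_RK}=S\cdot(1\otimes x)$. The paper gives no written proof (it is marked \qed), so your filling in of the bookkeeping matches the argument the authors had in mind.
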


As usual, when $A$ is a matrix, $I_{d}(A)$ denotes the ideal of its minors of size $d$.

\begin{corollary}
\label{cor:sop}
Let $R$ be a local ring, $\bsx$ a system of parameters for $R$, and $\bsy$ a finite subset of $R$ with $(\bsy)\supseteq (\bsx)$.

If $A$ is a matrix such that $A\bsy = \bsx$, then $I_{d}(A)\not\subseteq (\bsx)$ for $d=\dim R$.
\end{corollary}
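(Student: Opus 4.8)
The plan is to package the relation $A\bsy=\bsx$ as the degree-one part of a morphism of Koszul complexes and feed that morphism into Theorem~\ref{thm:cec-plus}. Write $\bsx=x_{1},\dots,x_{d}$ with $d=\dim R$ and $\bsy=y_{1},\dots,y_{m}$, where (as the statement implicitly requires) $\bsy\subseteq\fm$; then the ideal $(\bsy)$, containing the $\fm$-primary ideal $(\bsx)$, is itself $\fm$-primary, so $m\ge d$. Since $\sum_{j}A_{ij}y_{j}=x_{i}$, the $R$-linear map $K(\bsx)_{1}\to K(\bsy)_{1}$ sending $\wt x_{i}\mapsto\sum_{j}A_{ij}\wt y_{j}$ is compatible with the differentials in degree one and thus extends uniquely to a morphism of DG $R$-algebras $\phi\colon K(\bsx)\to K(\bsy)$ with $\phi_{0}=\idmap_{R}$; concretely, $\phi$ is the chain map $\textstyle\bigwedge(R^{d})\to\bigwedge(R^{m})$ induced by $A$. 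First I would record its top component: expanding
\[
\phi_{d}(\wt x_{1}\wedge\cdots\wedge\wt x_{d})=\Bigl(\textstyle\sum_{j}A_{1j}\wt y_{j}\Bigr)\wedge\cdots\wedge\Bigl(\textstyle\sum_{j}A_{dj}\wt y_{j}\Bigr)
\]
and collecting the basis vectors $\wt y_{J}:=\wt y_{j_{1}}\wedge\cdots\wedge\wt y_{j_{d}}$ indexed by the $d$-subsets $J=\{j_{1}<\cdots<j_{d}\}$ of $\{1,\dots,m\}$ gives
\[
\phi_{d}(\wt x_{1}\wedge\cdots\wedge\wt x_{d})=\sum_{|J|=d}\det(A_{J})\,\wt y_{J}\quad\text{in}\quad K(\bsy)_{d}=\textstyle\bigwedge^{d}R^{m},
\]
where $A_{J}$ is the $d\times d$ submatrix of $A$ on the columns in $J$. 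Thus the coefficients appearing here form a generating set of $I_{d}(A)$.

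The next step is to apply Theorem~\ref{thm:cec-plus} to $f:=\phi\colon K(\bsx)\to K(\bsy)$. Its hypotheses hold: $K(\bsy)$ is a complex of finitely generated free modules, and since $\phi_{0}=\idmap_{R}$ while $\bsx,\bsy\subseteq\fm$ one gets $\HH 0{k\otimes_{R}\phi}=\idmap_{k}\ne0$. The theorem then yields $\HH d{S\otimes_{R}\phi}\ne0$ for $S=R/(\bsx)$. Because $\dd^{K(\bsx)}(K(\bsx))\subseteq(\bsx)K(\bsx)$ and $K(\bsx)_{d+1}=0$, one has $\HH d{S\otimes_{R}K(\bsx)}=S\cdot\bigl(1\otimes\wt x_{1}\wedge\cdots\wedge\wt x_{d}\bigr)$, so the non-vanishing just obtained is exactly the assertion
\[
\phi_{d}(\wt x_{1}\wedge\cdots\wedge\wt x_{d})\notin(\bsx)K(\bsy)_{d}+\dd^{K(\bsy)}\bigl(K(\bsy)_{d+1}\bigr).
\]

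It remains to extract the statement about minors. The Koszul differential satisfies $\dd^{K(\bsy)}(K(\bsy)_{d+1})\subseteq(\bsy)K(\bsy)_{d}$, and $(\bsx)\subseteq(\bsy)$, so the right-hand side above lies inside $(\bsy)K(\bsy)_{d}=\bigoplus_{|J|=d}(\bsy)\,\wt y_{J}$. Comparing this with the displayed formula for $\phi_{d}$ and using that $\{\wt y_{J}\}$ is a free basis of $K(\bsy)_{d}$, we conclude $\det(A_{J})\notin(\bsy)$ for at least one $J$; hence $I_{d}(A)\not\subseteq(\bsy)$, and a fortiori $I_{d}(A)\not\subseteq(\bsx)$. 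The only step needing any care will be the top-degree computation of $\phi$ together with the observation that it is the passage modulo $(\bsx)$ supplied by Theorem~\ref{thm:cec-plus} — rather than the cruder non-vanishing of Corollary~\ref{cor:cec} — that produces the sharp conclusion; everything else is bookkeeping with Koszul differentials.
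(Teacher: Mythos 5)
Your overall approach is the paper's: build the DG algebra morphism $\phi\colon K(\bsx)\to K(\bsy)$ from $A$, compute $\phi_d(\wt x_1\wedge\cdots\wedge\wt x_d)=\sum_{|J|=d}\det(A_J)\,\wt y_J$, verify $\HH 0{k\otimes_R\phi}\ne0$, and invoke Theorem~\ref{thm:cec-plus} to obtain
\[
\phi_{d}(\wt x_{1}\wedge\cdots\wedge\wt x_{d})\notin(\bsx)K(\bsy)_{d}+\dd^{K(\bsy)}\bigl(K(\bsy)_{d+1}\bigr)\,.
\]
Everything up to that display is correct and agrees with the paper (which merely says ``$f_d$ can be represented by a column matrix whose entries are the $d\times d$ minors'' and leaves the computation implicit).

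The final deduction, however, is logically reversed. You observe correctly that $(\bsx)K(\bsy)_{d}+\dd^{K(\bsy)}(K(\bsy)_{d+1})\subseteq(\bsy)K(\bsy)_{d}$, but from the fact that $\phi_d(\wt x_1\wedge\cdots\wedge\wt x_d)$ lies \emph{outside the smaller set} you cannot infer that it lies outside the \emph{larger} set $(\bsy)K(\bsy)_d$. The asserted strengthening $I_d(A)\not\subseteq(\bsy)$ is in fact false in general: take $\bsy=(y_1,\dots,y_d)$, $\bsx=(y_1^{n+1},\dots,y_d^{n+1})$, and $A$ the diagonal matrix with entries $y_i^n$, as in the proof of Corollary~\ref{cor:mc}; then $I_d(A)=\bigl((y_1\cdots y_d)^n\bigr)\subseteq(\bsy)$ for $n\ge1$, yet the corollary's conclusion $I_d(A)\not\subseteq(\bsx)$ is exactly the Monomial Theorem. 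The repair is shorter than your detour and requires no inclusion at all: if $I_d(A)\subseteq(\bsx)$, then every $\det(A_J)\in(\bsx)$, so $\phi_d(\wt x_1\wedge\cdots\wedge\wt x_d)\in(\bsx)K(\bsy)_d\subseteq(\bsx)K(\bsy)_{d}+\dd^{K(\bsy)}(K(\bsy)_{d+1})$, contradicting the display. Hence $I_d(A)\not\subseteq(\bsx)$, as desired.
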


\begin{proof}
Let $K$ and $F$ be the Koszul complexes on $\bsx$ and $\bsy$, respectively. The matrix $A$ defines a unique morphism of DG $R$-algebras $f\colon K \to F$. Evidently, $\HH 0{k\otimes_{R}f}$ is the identity map on $k$, and hence is not zero.  Since $f_{d}$ can be represented by a column matrix whose entries are the various $d\times d$ minors of $A$, the desired statement is a direct consequence of Theorem~\ref{thm:cec-plus}.
\end{proof}

A special case of the preceding result yields the Monomial Theorem.

\begin{corollary}
\label{cor:mc}
When $y_{1},\dots,y_{d}$ is a system of parameters for local ring, one has
\[
(y_{1}\cdots y_{d})^{n}\not\in (y_{1}^{n+1},\dots,y_{d}^{n+1})
  \quad\text{for every integer}\quad n\ge1 \,.
\]
\end{corollary}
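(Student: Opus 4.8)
The plan is to read off the Monomial Theorem from Corollary~\ref{cor:sop} by making the right choices of $\bsx$, $\bsy$, and $A$. Concretely, I would set $\bsx := y_1^{n+1},\dots,y_d^{n+1}$ and $\bsy := y_1,\dots,y_d$. The first thing to check is that $\bsx$ is again a system of parameters for $R$: it consists of $d=\dim R$ elements, and the ideal $(y_1^{n+1},\dots,y_d^{n+1})$ has the same radical as $(y_1,\dots,y_d)$, namely $\fm$, hence is $\fm$-primary. Since each $y_i^{n+1}$ lies in $(y_1,\dots,y_d)$, we also have $(\bsy)\supseteq(\bsx)$, so the hypotheses of Corollary~\ref{cor:sop} on the pair $(\bsx,\bsy)$ are met.

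Next I would produce the matrix. Take $A$ to be the $d\times d$ diagonal matrix with diagonal entries $y_1^n,\dots,y_d^n$; then $A\bsy=\bsx$, so Corollary~\ref{cor:sop} applies and gives $I_d(A)\not\subseteq(\bsx)$. Because $A$ is a square matrix of size $d$, the ideal $I_d(A)$ is generated by the single minor $\det A=(y_1\cdots y_d)^n$. Thus $((y_1\cdots y_d)^n)\not\subseteq(y_1^{n+1},\dots,y_d^{n+1})$, which is precisely the claim $(y_1\cdots y_d)^n\notin(y_1^{n+1},\dots,y_d^{n+1})$.

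I do not expect a real obstacle in this argument; essentially everything is an unwinding of Corollary~\ref{cor:sop} for a carefully chosen diagonal matrix. The only step warranting a moment's attention is the verification that $y_1^{n+1},\dots,y_d^{n+1}$ is still a system of parameters, which rests on the elementary fact that replacing the elements of a system of parameters by powers of themselves leaves the radical of the generated ideal unchanged.
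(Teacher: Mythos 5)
Your proof is correct and is exactly the paper's argument: apply Corollary~\ref{cor:sop} with $\bsx=(y_1^{n+1},\dots,y_d^{n+1})$, $\bsy=(y_1,\dots,y_d)$, and $A$ the diagonal matrix with entries $y_i^n$, then observe that $I_d(A)$ is generated by $\det A=(y_1\cdots y_d)^n$. The extra checks you record (that $\bsx$ is still a system of parameters, and the containment of ideals) are the routine verifications the paper leaves implicit.
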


\begin{proof}
\pushQED{\qedhere}
Apply Corollary~\ref{cor:sop} to the inclusion $(y_{1}^{n+1},\dots,y_{d}^{n+1})\subseteq (y_{1},\dots,y_{d})$ and
\begin{xxalignat}{3}
&{\phantom{\square}}
&A:&=
\begin{bmatrix}
y_{1}^{n} & 0              & \cdots & 0 \\
0             & y_{2}^{n}  & \cdots & 0 \\
\vdots     &\vdots        &\ddots  & \vdots \\
0             & 0              & \cdots & y_{d}^{n}
\end{bmatrix} \,.
&&\square
\end{xxalignat}
\end{proof}
 
We also deduce from Theorem~\ref{thm:cec-plus} another form of the Canonical Element Theorem. Roberts \cite{Ro} proposed the statement and proved that it is equivalent to the Canonical Element Theorem; a different proof appears in Huneke and Koh~\cite{HK}.   

Recall that for any pair $(S,T$) of $R$-algebras the graded module $\Tor {}RST$ carries a natural structure of graded-commutative $R$-algebra, given by the $\pitchfork$-product of Cartan and Eilenberg~ \cite[Chapter XI.4]{CE}. 

\begin{lemma}
\label{lem:tor}
Let $R$ be a commutative ring, $I$ an ideal of $R$, and set $S:=R/I$.  Let $G\to S$ be some $R$-free resolution,
$K$ be the Koszul complex on some generating set of~$I$, and $g\colon K\to G$ a morphisms of $R$-complexes 
lifting the identity of $S$.

For every surjective homomorphism $\psi\colon S\to T$ of of commutative rings  there is a commutative diagram of 
strictly graded-commutative $S$-algebras
\[
\xymatrixcolsep{.75pc}
\xymatrixrowsep{2.5pc}
\xymatrix{
S\otimes_{R}K \ar@{=}[r] 
	& \bigwedge_{S}\HH 1{S\otimes_{R}K} \ar@{->>}[rr]^-{\alpha} 
			&&\bigwedge_{S} \Tor 1RSS \ar@{->}[rr]^-{\mu^{S}} \ar@{->>}[d]_-{\bigwedge_{\psi}\Tor {1}RS{\psi}}
				&& \Tor {}RSS \ar@{->}[d]^-{\Tor {}RS{\psi}} \\
	& 		&&\bigwedge_{T} \Tor 1RST \ar@{->}[rr]^-{\mu^{T}}   
				&&  \Tor {}RST
}
\]
where $\alpha_1=\HH 1{S\otimes_Rg}$, the map $\alpha$ is defined by the functoriality of exterior algebras, and 
the maps $\mu^?$ are defined by the universal property of exterior algebras.
  \end{lemma}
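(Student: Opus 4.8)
The plan is to unwind the definitions of the four algebra maps in the diagram and check commutativity of the single square, since the triangle on the left is a definition and the map $\alpha$ is functorially induced. The key observation is that all four objects in the square are exterior algebras (over $S$ or over $T$) on a single degree-one module, and the maps $\mu^S$, $\mu^T$ are the universal $S$- resp. $T$-algebra maps out of those exterior algebras into the respective Tor-algebras; hence to check commutativity of the square it is enough to check it after restricting to the degree-one generators, i.e. on the modules $\Tor 1RSS$, $\Tor 1RST$ themselves. So the real content is the commutativity of the square
\[
\xymatrix{
\Tor 1RSS \ar[r] \ar[d] & \Tor {}RSS \ar[d] \\
\Tor 1RST \ar[r] & \Tor {}RST
}
\]
in which the horizontal maps are the degree-one inclusions of the $\pitchfork$-algebras and the vertical maps are $\Tor{}RS{\psi}$ in its two incarnations. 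This commutes because $\Tor{}RS{-}$ is a functor on $R$-algebras with values in graded $R$-algebras, so it sends the algebra map $\psi$ to an algebra map, which in particular is compatible with the degree-one pieces.

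First I would record precisely the identifications in the top row: $S\otimes_R K$ is the exterior $S$-algebra on $S\otimes_R K_1=\HH 1{S\otimes_R K}$ (this uses that the differential of $K$ has entries in $I$, so $S\otimes_R K$ has zero differential and is the exterior algebra on its degree-one piece, as recalled in \ref{ssec:koszul}); the map $\alpha_1=\HH 1{S\otimes_R g}\colon \HH 1{S\otimes_R K}\to \HH 1{S\otimes_R G}=\Tor 1RSS$ is a map of $S$-modules, inducing $\alpha=\bigwedge_S\alpha_1$ by functoriality of $\bigwedge_S$; and $\mu^S\colon\bigwedge_S\Tor 1RSS\to\Tor{}RSS$ is the canonical $S$-algebra map classifying the degree-one inclusion, which exists and is well-defined because $\Tor{}RSS$ is strictly graded-commutative (squares of odd-degree elements vanish), which is exactly what the $\pitchfork$-product of \cite[Ch.~XI.4]{CE} guarantees. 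The same discussion applies verbatim over $T$ for the bottom row.

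Next I would verify the right-hand vertical map $\Tor{}RS{\psi}$ is a homomorphism of graded $S$-algebras — this is standard for the $\pitchfork$-product, functorial in the second argument — and that its degree-one component is precisely $\Tor 1RS{\psi}$; likewise $\bigwedge_\psi$ followed by $\bigwedge_S$-to-$\bigwedge_T$ base change on degree one is $\Tor 1RS\psi$. Then commutativity of the square reduces, by the universal property of $\bigwedge_S$, to the identity of degree-one maps $\Tor 1RS\psi = \Tor 1RS\psi$, which is a tautology; one only has to observe that the target $\Tor{}RST$ is strictly graded-commutative so that $\mu^T$ is defined on all of $\bigwedge_T\Tor 1RST$ and the comparison makes sense. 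Finally, commutativity of the left triangle in the displayed diagram is immediate since $\alpha=\bigwedge_S\alpha_1$ with $\alpha_1=\HH 1{S\otimes_R g}$ and the equality $S\otimes_R K=\bigwedge_S\HH 1{S\otimes_R K}$ identifies the leftmost map with the identity on the exterior algebra.

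The main obstacle is not any one computation but assembling the correct bookkeeping: one must be careful that "strictly graded-commutative'' is the hypothesis needed to get $\mu^?$ out of the full exterior algebra (not merely the tensor algebra), and that the $\pitchfork$-product of Cartan–Eilenberg is indeed functorial as an algebra structure in both $R$-algebra arguments, so that $\Tor{}RS{\psi}$ is a genuine graded-algebra map; granting those two facts from \cite{CE}, the rest is formal. I would therefore spend most of the written proof citing \cite[Ch.~XI.4]{CE} for the $\pitchfork$-product and its naturality, stating the universal property of $\bigwedge$, and then remarking that the square commutes on degree-one generators, hence everywhere.
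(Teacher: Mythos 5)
Your treatment of the commutative square is correct and runs along the same lines as the paper's: you reduce to degree-one generators (legitimate since $\bigwedge_S\Tor 1RSS$ is generated in degree one and all maps involved are algebra maps over $\psi$) and invoke naturality of the Cartan--Eilenberg $\pitchfork$-product. The identification $S\otimes_R K=\bigwedge_S\HH 1{S\otimes_R K}$ via $\dd^K(K)\subseteq IK$ also matches the paper.

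However, you have silently dropped part of the statement. The diagram asserts, via the two-headed arrows, that $\alpha$ and $\bigwedge_\psi\Tor 1RS\psi$ are \emph{surjective}, and these surjectivities are used downstream in the proof of Theorem~\ref{thm:wedge}. You describe $\alpha$ as ``functorially induced'' and $\alpha_1$ as a map of $S$-modules, but never argue that $\alpha_1$ (hence $\alpha$) is onto, nor that $\Tor 1RS\psi$ (hence $\bigwedge_\psi\Tor 1RS\psi$) is onto. Neither is a tautology: the first requires observing that one may replace $G$ by a resolution with $G_{\les 1}=K_{\les 1}$, so that $\alpha_1$ becomes the natural quotient $S\otimes_R K_1\thra\Tor 1RSS$, and that an exterior power of a surjection is a surjection; the second requires the identification of $\Tor 1RS\psi$ with the canonical quotient map $I/I^2\to I/IJ$ where $J=\Ker(R\to T)$. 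Your write-up needs to supply both of these arguments before it proves the lemma as stated.
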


  \begin{proof}
The equality follows from $\dd^{K}(K)\subseteq IK$ and $\nat K =\bigwedge_{R}K_1$.  The resolution $G$ can be chosen to have 
$G_{\les1}=K_{\les1}$; this makes $\alpha_1$ surjective, and the surjectivity of $\alpha$ follows.  The map $\Tor {1}RS{\psi}$ 
is surjective because it can be identified with the natural map $I/I^2\to I/IJ$, where $J=\Ker(R\to T)$; the surjectivity of 
$\bigwedge_{\psi}\Tor {1}RS{\psi}$ follows.  The square  commutes by the naturality of $\pitchfork$-products.
  \end{proof}

\begin{theorem}
  \label{thm:wedge}
Let $(R,\fm,k)$ be a local ring, $I$ a parameter ideal, and $S:=R/I$. For each surjective homomorphism 
$S\to T$ the morphism of graded $T$-algebras
\[
\mu^{T}\colon \textstyle{\bigwedge}_{T} \Tor 1RST \lra \Tor {}RST
\]
has the property that $\mu^{T}\otimes_{T}k$ is injective. In particular, $\mu^{k}$ is injective.
  \end{theorem}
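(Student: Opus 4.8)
The plan is to reduce the claim about $\mu^T$ to a non-vanishing statement amenable to the Canonical Element Theorem as packaged in Theorem~\ref{thm:cec-plus}. First I would fix a system of parameters $\bsx=x_1,\dots,x_d$ generating $I$, let $K$ be its Koszul complex, and let $g\colon K\to G$ be a lifting of $\idmap_S$ to a free resolution $G$ of $S$ over $R$; by Lemma~\ref{lem:tor} such $g$ computes the algebra map $\mu^S$ and its base change along $S\to T$. Tensoring with $k$ and using that the top homology of $K\otimes_R k$ is one-dimensional, the statement ``$\mu^T\otimes_T k$ is injective'' translates, via the multiplicativity of the $\pitchfork$-product and the fact that $\bigwedge_T\Tor 1RST$ is a free exterior algebra on $\Tor 1RST=I/IJ$ (with $J=\Ker(R\to T)$), into the assertion that the top exterior power survives: concretely, that the class of $g_d(x)$, where $x=\wt x_1\cdots\wt x_d$ spans $K_d$, is nonzero in $k\otimes_R\Tor dRST$, equivalently that $g_d(K_d)\not\subseteq \fm G_d + \dd^G(G_{d+1})$. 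This is exactly the conclusion ``$\HH d{S\otimes_R g}\ne 0$'' of Theorem~\ref{thm:cec-plus}, whose hypothesis $\HH 0{k\otimes_R g}\ne 0$ holds because $g$ lifts $\idmap_S$ and hence induces the identity on $S\otimes_R k=k$.

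Carrying this out in order: (i) reduce to the case $T=S$ — once $\mu^S\otimes_S k$ is injective, apply $-\otimes_T k$ to the right-hand commuting square of Lemma~\ref{lem:tor}, using that $\bigwedge_\psi\Tor 1RS\psi$ and hence $(\bigwedge_\psi\Tor 1RS\psi)\otimes k$ is surjective together with right-exactness of tensor, to deduce injectivity of $\mu^T\otimes_T k$ from that of $\mu^S\otimes_S k$ and compatibility with $\mu^k$ at the residue field; actually the cleanest route is to prove injectivity of $\mu^k$ directly and note all the maps are determined by it. (ii) Identify $\bigwedge_k\Tor 1RSk$ as an exterior algebra on the $d$-dimensional $k$-space $\Tor 1RSk\cong I/\fm I$, so a $k$-algebra map out of it is injective iff it is injective on the (one-dimensional) top degree $d$, i.e. iff the top wedge of a $k$-basis maps to something nonzero in $\Tor dRSk$. (iii) Choose $G$ with $G_{\les 1}=K_{\les1}$ as in Lemma~\ref{lem:tor}, so $\alpha_1$ is an isomorphism onto $\Tor 1RSS$ after $\otimes k$ becomes $\idmap$ on $\Tor 1RSk$, whence the image under $\mu^k$ of the top wedge is precisely the class of $g_d(\wt x_1\cdots\wt x_d)$ in $k\otimes_R\Tor dRSk=\HH d{k\otimes_R G}$, and $g$ restricted to $S\otimes_R-$ computes $\HH d{S\otimes_R g}$. (iv) Invoke Theorem~\ref{thm:cec-plus} with this $F=G$ and $f=g$.

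The main obstacle I anticipate is bookkeeping in step (iii): making sure the identification of the top graded piece of $\bigwedge_T\Tor 1RST$ with the element $g_d(x)$ is canonical and compatible with the $\pitchfork$-product structure, rather than off by a unit or obscured by the distinction between $\Tor 1RSS$ and $\Tor 1RSk$. The point is that the $\pitchfork$-product on $\Tor RSS$ computed from the DG-algebra $K$ (which is what Lemma~\ref{lem:tor} gives, since $\nat K=\bigwedge_R K_1$ with zero-homology differential forces $S\otimes_R K$ to be the exterior algebra on $\Tor 1RSK$) sends the product $\wt x_1\cdots\wt x_d$ to the class represented by $g_d(x)$ in $G$; no signs interfere because we work in the exterior algebra and then push forward along the algebra map $g$. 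Everything else — the reduction over $T$, the surjectivity inputs from Lemma~\ref{lem:tor}, and the verification that $\HH 0{k\otimes_R g}\ne0$ — is formal. Once step (iii) is pinned down, Theorem~\ref{thm:cec-plus} delivers the conclusion immediately, and the ``in particular'' for $\mu^k$ is the case $T=k$.
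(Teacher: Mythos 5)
There is a genuine gap, concentrated in the claimed equivalence at the end of step (iii)/beginning of step (iv). You assert that ``$g_d(K_d)\not\subseteq \fm G_d + \dd^G(G_{d+1})$'' is \emph{exactly} the conclusion $\HH d{S\otimes_R g}\ne 0$ of Theorem~\ref{thm:cec-plus} applied with $F=G$ and $f=g$. It is not. With $F=G$ a free resolution of $S$, the theorem's conclusion $\HH d{S\otimes_R g}\ne 0$ unwinds to the statement that $g_d(x)$ has nonzero class in $\HH d{S\otimes_R G}=\Tor dRSS$, i.e.\ $g_d(x)\notin IG_d+\dd^G(G_{d+1})$. What you actually need, to conclude $\mu^k_d\ne0$, is that $g_d(x)$ has nonzero class in $\Tor dRSk=\HH d{k\otimes_R G}$, i.e.\ $g_d(x)\notin\fm G_d+\dd^G(G_{d+1})$. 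Since $IG_d\subseteq\fm G_d$, the second condition is strictly stronger than the first; they are not equivalent, and the comparison map $\Tor dRS\psi\colon\Tor dRSS\to\Tor dRSk$ has no reason to be injective. So invoking Theorem~\ref{thm:cec-plus} with $f=g\colon K\to G$ does not deliver what you want.

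The paper closes this gap by bringing in one more piece of data: a free resolution $F\xra{\simeq}k$ and a chain map $h\colon G\to F$ lifting $\psi\colon S\to k$. Then $\Tor{}RS\psi=\hh{S\otimes_R h}$, and the target of the chain of identifications becomes $\HH d{S\otimes_R F}=\Tor dRSk$. Applying Theorem~\ref{thm:cec-plus} to the composite $f=hg\colon K\to F$ (note $\HH 0{k\otimes_R hg}\ne0$ still holds) yields $\HH d{S\otimes_R hg}\ne0$, which lands in the \emph{right} group $\Tor dRSk$ and gives $\mu^k_d\ne0$. Everything else in your sketch---the reduction to $T=k$ via the commutative square of Lemma~\ref{lem:tor}, the use of Remark~\ref{rem:wedge} to upgrade $\mu^k_d\ne0$ to injectivity of $\mu^k$, and the choice $G_{\les1}=K_{\les1}$ so that $\alpha$ is surjective---is fine and matches the paper; the missing ingredient is the lift $h$ and the application of Theorem~\ref{thm:cec-plus} to $hg$ rather than to $g$.
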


\begin{proof}
The functoriality of the construction of $\mu$ implies that the canonical surjection $\pi\colon T\to k$ 
induces a commutative diagram of graded-commutative algebras
\[
\xymatrixcolsep{1.5pc}
\xymatrixrowsep{2.5pc}
\xymatrix{
\bigwedge_{T} \Tor 1RST \ar@{->}[r]^-{\mu^{T}} \ar@{->}[d]_-{\bigwedge_{\pi}\Tor {1}RS{\pi}}
	&  \Tor {}RST \ar@{->}[d]^{\Tor{}RS\pi} \\
\bigwedge_{k} \Tor 1RSk \ar@{->}[r]^-{\mu^{k}} &  \Tor {}RSk 
}
\]
It is easy to verify that $\pi$ induces a bijective map 
  \[
\Tor 1RS{\pi}\otimes_{T}k\colon \Tor 1RST\otimes_{T}k\to \Tor 1RSk\,,
  \]
so $(\wedge_{\pi}\Tor {}RS{\pi})\otimes_{T}k$ is an isomorphism.  Thus it suffices to show $\mu^{k}$ is injective.

Let $K$ be the Koszul complex on a minimal generating set of  $I$.  Let $G\xra{\simeq}S$ and $F\xra{\simeq} k$ be 
$R$-free resolutions of $S$ and $k$, respectively.  Lift the identity map of $S$ and the canonical surjection $\psi\colon S\to k$ to morphisms $g\colon K\to G$  and $h\colon G\to F$, respectively. We have $\mu^{S}\alpha=\hh{S\otimes_Rg}$ and $\Tor {}RS{\psi}=\hh{S\otimes_R h}$.  This implies the second equality in the string
  \[
\mu^k_d\Tor {d}RS{\pi}\alpha_d = \Tor {d}RS{\psi}\mu^{S}_d\alpha_d=\HH d{S\otimes_{R}hg}\ne0 \,.
  \]
The first equality comes from Lemma \ref{lem:tor}, with $T=k$, and the non-equality from Theorem~\ref{thm:cec-plus}, 
with $f=hg$.  In particular, we get $\mu^k_d\ne0$.  We have an isomorphism $\Tor {1}RSk\cong\bigwedge_k k^d$ 
of graded
$k$-algebras, so $\mu^{k}$ is injective by the next remark.
  \end{proof}

  \begin{remark}
  \label{rem:wedge}
If $Q$ is a field, $d$ is a non-negative integer, and $\lambda\colon\bigwedge_Q Q^d \to B$ is a homomorphism of graded 
$Q$-algebras with $\lambda_d\ne0$, then $\lambda$ is injective.

Indeed, the graded subspace $\bigwedge^d_Q Q^d$ of exterior algebra $\bigwedge_Q Q^d$ is contained in every every 
non-zero ideal and has rank one, so $\lambda_d\ne0$ implies $\Ker(\lambda)=0$.
  \end{remark}

\section{Ranks in finite free complexes}
\label{sec:rank}

This section is concerned with DG modules over Koszul complexes on sequences of parameters.
Under the additional assumptions that $R$ is a domain and $F$ is a resolution of some $R$-module, the theorem 
below was proved in \cite[6.4.1]{Av}, and earlier for cyclic modules in \cite[1.4]{BE}; background is reviewed after 
the proof.  The Canonical Element Theorem, in the form of Theorem~\ref{thm:cec-plus} above, is used in the proof.
 
\begin{theorem}
\label{thm:rank}
Let $(R,\fm,k)$ be a local ring, set $d=\dim R$, and let $F$ be a complex of finite free $R$-modules with $\HH 0F\ne 0$ and $F_{i}=0$ for $i<0$.

If $F$ admits a structure of DG module over the Koszul complex on some system of parameters of $R$, then there is an inequality
  \begin{equation}
    \label{eq:rank}
\rank_{R} (F_{n}) \ge \binom dn \quad\text{for each}\quad n\in\bbZ\,.
  \end{equation}
  \end{theorem}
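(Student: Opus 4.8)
The plan is to reduce the statement, by a local-duality-type manoeuvre, to an application of Theorem~\ref{thm:cec-plus}. Write $K = K(\bsx)$ for the Koszul complex on the system of parameters $\bsx = x_1,\dots,x_d$ that acts on $F$, and set $S = R/(\bsx)$. The action $K \otimes_R F \to F$ of the DG algebra $K$ on $F$ makes $F^{*} = \Hom RFR$ a DG $K$-module as well, via the isomorphism $K^{*} \cong \shift^{-d}K$ of \ref{ssec:koszul}: concretely, dualizing the structure map gives $F^{*} \to \Hom RK{F^{*}} \cong \shift^{-d}K \otimes_R F^{*}$, i.e.\ a map $K \otimes_R F^{*} \to \shift^{-d}F^{*}$, so after a suspension $G := \shift^{d}F^{*}$ becomes a DG $K$-module with $G_i = (F_{d-i})^{*}$ and thus $\rank_R(G_i) = \rank_R(F_{d-i})$. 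Hence it suffices to produce, for each $n$, a morphism $f\colon K \to G$ of $R$-complexes with $\HH 0{k\otimes_R f}\ne 0$ and then to unwind what Theorem~\ref{thm:cec-plus} gives about $G_d = (F_0)^{*}$ — but this is not quite the shape we want, so instead I will feed the \emph{dual} of the $K$-module structure directly into the theorem.

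Here is the cleaner route. Since $F$ is a DG $K$-module and $\HH 0F \ne 0$, the unit $\eta\colon K \to F$, $1 \mapsto 1_F$ (where $1_F \in F_0$ generates the image of $K_0$), is a morphism of $R$-complexes; more to the point, $F$ being a DG $K$-module means each multiplication map $K_1 \otimes_R F_{n-1} \to F_n$ assembles, together with the differential, to show that $\nat F$ is a module over $\nat K = \bigwedge_R K_1$, so each $F_n$ receives a surjection from $K_n \otimes_R F_0 \cong \bigwedge^n_R(R^d) \otimes_R F_0$ after passing to $k$ — this already forces $\rank_k(k\otimes_R F_n) \ge \binom dn \rank_k(k\otimes_R F_0)$, provided that this composite $K_n \otimes_R F_0 \to F_n$ stays nonzero mod $\fm$ in top degree, which is precisely the content extracted from Theorem~\ref{thm:cec-plus}. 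So the first real step is: show the $R$-span of $1_F$ inside $\HH 0F$ is nonzero mod $\fm\HH 0F$, equivalently that $\eta$ induces a nonzero map $\HH 0{k\otimes_R \eta}$. This may fail for the chosen $1_F$, but one can replace $F$ by a direct summand or argue that some cyclic DG $K$-submodule of $F$ generated by a minimal generator of $\HH 0F$ works; since $\HH 0{K} = S$ acts on $\HH 0F$, any minimal generator $z$ of $\HH 0F$ satisfies $(\bsx)z = 0$, so $z$ spans a copy of $S$ inside $\HH 0F$ and the submodule of $F$ it generates is a DG $K$-module surjecting onto that copy of $S$ in degree $0$. Passing to that submodule we may assume $\HH 0{k\otimes_R\eta}\ne 0$.

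Now apply Theorem~\ref{thm:cec-plus} with $f = \eta\colon K \to F$: it yields $\HH d{S\otimes_R \eta}\ne 0$, i.e.\ $1_F$ times the basis element $x = \wt x_1\cdots\wt x_d$ of $K_d$ is a nonzero element of $\HH d{S\otimes_R F} = F_d/((\bsx)F_d + \dd^F(F_{d+1}))$; in particular $1_F \cdot x \notin \fm F_d$ after further reduction, since $(\bsx)\subseteq\fm$, so the image of $K_d\otimes_R F_0 \to F_d$ is nonzero mod $\fm$. To get the bound $\binom dn$ for general $n$, not just $n=d$, I localize and truncate so as to iterate: for fixed $n$, consider the DG $K$-submodule of $F$ generated in degree $0$ and, using that $\nat F$ is a $\bigwedge_R R^d$-module, observe that the multiplication $\bigwedge^n_R R^d \otimes_R F_0 \to F_n$ composed with $\bigwedge^{d-n}_R R^d \otimes_R (\,-\,) \to F_d$ equals multiplication by $x$ up to sign; nonvanishing mod $\fm$ of the degree-$d$ map, which we have just established, forces the degree-$n$ map $\bigwedge^n_k k^d \otimes_k (k\otimes_R F_0) \to k\otimes_R F_n$ to be injective — because a nonzero element of $\bigwedge^n_k k^d$ can always be completed to a "product" landing in the one-dimensional top wedge, exactly as in Remark~\ref{rem:wedge}. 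Since $\dim_k \bigwedge^n_k k^d = \binom dn$ and $k\otimes_R F_0 \ne 0$, this gives $\rank_R(F_n) = \dim_k(k\otimes_R F_n) \ge \binom dn$.

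The main obstacle I anticipate is the degree-shifting/exterior-algebra bookkeeping in the last paragraph: making precise that the DG $K$-module structure on $F$ descends to a genuine graded $\bigwedge_k k^d$-module structure on $k\otimes_R F$ in a way compatible with the top-degree nonvanishing furnished by Theorem~\ref{thm:cec-plus}, and that "nonzero in top wedge degree" propagates down to "injective in intermediate wedge degrees" via a Remark~\ref{rem:wedge}-style argument applied fibrewise over $k$. One has to be slightly careful that Theorem~\ref{thm:cec-plus} is stated for an arbitrary morphism of complexes $f\colon K\to F$, not for a DG-module unit, so the reduction to the case $\HH 0{k\otimes_R\eta}\ne 0$ (via passing to the cyclic sub-DG-module on a minimal generator of $\HH 0F$, which is where $\HH 0F\ne 0$ is used) must be done cleanly; but none of this is deep, and the whole argument is essentially "$F$ is generated over $\bigwedge k^d$ in degree $0$, and the Canonical Element Theorem says the generator survives to the top."
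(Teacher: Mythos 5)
Your overall strategy -- build a morphism $f\colon K\to F$ from the DG $K$-module structure, feed it to Theorem~\ref{thm:cec-plus}, and then invoke a Remark~\ref{rem:wedge}-type injectivity of graded $\bigwedge Q^d$-module maps -- is the same as the paper's, but your final step has a genuine gap that the paper avoids by a different reduction.

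The gap is the inference ``$\HH d{S\otimes_R\eta}\ne 0$ implies $1_F\cdot x\notin\fm F_d$.'' What Theorem~\ref{thm:cec-plus} actually gives is that the class of $1_F\cdot x$ is nonzero in $\HH d{S\otimes_R F}$, that is, $1_F\cdot x\notin(\bsx)F_d+\dd^F(F_{d+1})$. Since $(\bsx)\subseteq\fm$, we have $(\bsx)F_d+\dd^F(F_{d+1})\subseteq\fm F_d$ whenever $F$ is minimal, so belonging to $\fm F_d$ is \emph{not} ruled out by the nonvanishing of the class -- the implication runs in the wrong direction, and it fails in general. Consequently, you cannot deduce that the composite $K_d\to F_d\to k\otimes_R F_d$ is nonzero, and your Remark~\ref{rem:wedge}-style argument over $k$ never gets off the ground: tensoring with the residue field can kill a nonzero element of $F_d$.

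The paper sidesteps exactly this by never passing to $k$. Instead it first replaces $R$ by $R/\fp$ for a minimal prime $\fp$ with $\dim(R/\fp)=d$ (which preserves the hypotheses and the ranks of the $F_n$), so that $R$ may be assumed to be a domain; it then chooses a cycle $z\in F_0$ representing a minimal generator of $\HH 0F$ and sets $f(a)=az$, applies Theorem~\ref{thm:cec-plus} to conclude $f(K_d)\ne 0$ as an element of $F_d$, and finally tensors with the \emph{fraction field} $Q$ of $R$ rather than with $k$. Over $Q$ the nonzero element $f(K_d)$ of the free module $F_d$ stays nonzero, and Remark~\ref{rem:wedge} applies to the map of graded $\bigwedge_Q Q^d$-modules $\lambda:=Q\otimes_R\nat f$, yielding $\rank_R(F_n)=\rank_Q(Q\otimes_RF_n)\ge\binom dn$. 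To repair your argument you would need precisely this ``reduce to a domain and invert'' step; the residue field is the wrong field here. A minor additional remark: your device of passing to the cyclic DG $K$-submodule generated by $z$ is unnecessary (the map $a\mapsto az$ is already a morphism $K\to F$, so there is no need to shrink $F$), and that submodule need not consist of free modules, so its ranks are not a priori defined.
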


\begin{proof}
The desired inequality is vacuous when $d=0$, so suppose $d\ge 1$.  Let $\bsx$ be the said system of parameters of $R$ and $K$ the Koszul complex on $\bsx$.  Since $F$ is a DG $K$-module, each $\HH iF$ is an $R/(\bsx)$-module, and hence of finite length.

First we reduce to the case when $R$ is a domain. To that end, let $\fp$ be a prime ideal of $R$ such that $\dim (R/\fp)=d$. Evidently, the image of $\bsx$ in $R$ is a system of parameters for $R/\fp$. By base change, $(R/\fp) \otimes_{R}F$ is a DG module over $(R/\fp)\otimes_{R}K$, the Koszul complex on $\bsx$ with coefficients in $R/\fp$, with
\[
\HH 0{(R/\fp) \otimes_{R}F}\cong R/\fp\otimes \HH 0F\ne 0\,.
\]
Moreover, the rank of $\nat F$ as an $R$-module equals the rank of $\nat{(R/\fp)\otimes_{R}F}$ as an $R/\fp$-module. Thus, after base change to $R/\fp$ we can assume $R$ is a domain.

Choose a cycle $z\in F_{0}$ that maps to a minimal generator of the $R$-module $\HH 0F$. Since $F$ is a DG $K$-module, this yields a morphism of DG $K$-modules 
\[
f\colon K\to F \quad\text{with}\quad f(a)=a z\,.
\]
This is, in particular, a morphism of complexes. Since $k\otimes_{R}\HH 0F\ne 0$, by the choice of $z$, Theorem~\ref{thm:cec-plus} applies, and yields that $f(K_{d})\ne 0$. Since $R$ is a domain, this implies  $f(Q\otimes_{R}K_{d})$ is non-zero, where $Q$ is the field of fractions of $R$. 

Set $\Lambda:=\nat{(Q\otimes_{R}K)}$ and consider the homomorphism of graded $\Lambda$-modules
\[
\lambda:= Q\otimes_{R}\nat f\colon \Lambda\to Q\otimes_{R}\nat F\,.
\]
As $\Lambda$ is isomorphic to $\bigwedge_Q Q^d$, Remark  \ref{rem:wedge} gives the inequality in the display
\[
\rank_{R}(F_{n}) =\rank_{Q}(Q\otimes_{R}F_{n}) \geq \rank_{Q}(\Lambda_{n}) = \binom dn\,.
\]
Both equalities are clear.
\end{proof}

The inequalities \eqref{eq:rank} are related to a major topic of research in commutative algebra.  We discuss
it for a local ring $(R,\fm,k)$ and a bounded $R$-complex $F$ of finite free modules with 
$F_{<0}=0$, homology of finite length, and $\HH 0F\ne 0$.

\subsection{Ranks of syzygies}
  \label{ssec:total}
The celebrated and still open Rank Conjecture of Buchsbaum and Eisenbud \cite[Proposition~1.4]{BE}, and Horrocks \cite[Problem 24]{Ha}  predicts that \eqref{eq:rank} holds whenever $F$ is a \emph{resolution} of some module of finite length.  

That conjecture is known for $d\le4$.  Its validity would imply $\sum_{n} \rank_R F_n\ge2^d$.  For $d=5$ and equicharacteristic 
$R$, this was proved in \cite[Proposition~1]{AB} by using Evans and Griffth's Syzygy Theorem \cite{EG}; in view of~\cite{An}, 
it holds for all $R$.
 
In a breakthrough, M.~Walker  \cite{Wa} used methods from K-theory to prove that $\sum_{n} \rank_R F_n\ge2^d$ holds 
when $R$ contains $\frac12$ and is complete intersection (in particular, regular), and when $R$ is an algebra over some 
field of positive characteristic.

\subsection{Obstructions for DG module structures}
  \label{ssec:obs}
Theorem \ref{thm:rank} provides a series of obstruction for the existence of any DG module structure on $F$.  In particular, it 
implies that if $\rank_R F< 2^{d}$ holds with $d=\dim R$, then $F$ supports no DG module structure over $K(\bsx)$ 
for \emph{any} system of parameters $\bsx$.  Complexes satisfying the restriction on ranks were recently constructed 
in \cite[4.1]{IW}. These complexes have nonzero homology in degrees $0$ and $1$, so they are not resolutions of $R$-modules.

\subsection{DG module structures on resolutions}
  \label{ssec:obs}
Let $F$ be a minimal resolution of an $R$-module $M$ of nonzero finite length and $\bsx$ a parameter set for~$R$ with $\bsx M=0$.  

When $F$ admits a DG module structure over $K(\bsx)$ the Rank Conjecture holds, by Theorem \ref{thm:rank}.  It was conjectured in \cite[1.2$'$]{BE} that such a structure exists for all $F$ and~$\bsx$.  An obstruction to its existence was found in \cite[1.2]{Av}, and examples when that obstruction is not zero were produced in \cite[2.4.2]{Av}.  On the other hand, by \cite[1.8]{Av} the obstruction vanishes when $\bsx$ lies in $\fm\ann_R(M)$.  

It is not known if $F$ supports some DG $K(\bsx)$-module structure for special choices of~$\bsx$; in particular, 
for high powers of systems of parameters contained in $\ann_R(M)$.

 
\end{document}